\numberwithin{equation}{section}
\newtheorem{theorem}{Theorem}[section]
\newtheorem{coro}[theorem]{Corollary}
\newtheorem{lemma}[theorem]{Lemma}
\newtheorem{definition}[theorem]{Definition}
\newtheorem{conjecture}[theorem]{Conjecture}
\theoremstyle{remark}
\theoremstyle{definition}
\newtheorem{example}[theorem]{Example}
\newtheorem{remark}[theorem]{Remark}
\title{Generalized Palindromic Continued Fractions}
\date{\today}   
\author{David M. Freeman}
\address{University of Cincinnati Blue Ash College\\ 9555 Plainfield Road, Cincinnati, Ohio 45236}
\email{david.freeman@uc.edu}
\begin{document} 

\keywords{continued fractions, transcendental numbers}
\subjclass[2010]{11J70 (11J81)} 

\begin{abstract}
In this paper we introduce a generalization of palindromic continued fractions as studied by Adamczewski and Bugeaud. We refer to these generalized palindromes as $m$-palindromes, where $m$ ranges over the positive integers. We provide a simple transcendency criterion for $m$-palindromes, extending and slightly refining an analogous result of Adamczewski and Bugeaud. We also provide methods for constructing examples of $m$-palindromes. Such examples allow us to illustrate our transcendency criterion and to explore the relationship between $m$-palindromes and stammering continued fractions, another concept introduced by Adamczewski and Bugeaud.
\end{abstract}

\maketitle

\section{Introduction}\label{S:intro}

In \cite{AB07} (see also \cite{AA07} and \cite{AB10}), Adamczewski and Bugeaud study \textit{palindromic} continued fractions. That is, continued fractions whose sequences of partial quotients exhibit a certain form of mirror symmetry (see Section \ref{S:prelims} for a more precise definition). Amongst other applications, the authors used their results about palindromic continued fractions to provide new examples of transcendental numbers with bounded partial quotients, to provide new proofs of the transcendency of certain well-known continued fractions (such as the Thue-Morse continued fraction; see \cite{AB07_AMM}), and to answer questions related to the Littlewood Conjecture (see \cite{AB06_JLMS}). The main purpose of the present paper is to generalize and slightly refine the following result from \cite{AB07} (see also \,\cite[Theorem 35]{AA07}). 

\begin{theorem}[Adamczewski, Bugeaud; \cite{AB07}]\label{T:large_pals} 
Let $\alpha=[a_0,a_1,\dots]$ denote an irrational continued fraction. If $\alpha$ is palindromic, then $\alpha$ is either transcendental or quadratic.
\end{theorem}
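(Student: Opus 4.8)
The plan is to deduce the theorem from the Schmidt Subspace Theorem, exploiting the fact that palindromic symmetry forces the convergents of $\alpha$ to satisfy an unusually strong system of simultaneous rational approximations. First I would record the standard mirror identity for continued fractions: if $p_n/q_n=[a_0;a_1,\dots,a_n]$ denotes the $n$-th convergent, then $p_n/p_{n-1}=[a_n;a_{n-1},\dots,a_0]$ and $q_n/q_{n-1}=[a_n;a_{n-1},\dots,a_1]$. Reading a palindromic prefix backwards leaves its value unchanged, so the palindromic hypothesis supplies infinitely many indices $n$ for which a relation of the form $p_{n-1}=q_n$ holds (the precise identity depending only on the convention for $a_0$, and becoming an approximate equality in the more general definition). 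This is the one point at which the hypothesis enters, and it converts the symmetry of the partial quotients into an exact algebraic relation among the numerators and denominators.

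Next I would set up the Subspace Theorem. Assuming, toward the desired dichotomy, that $\alpha$ is algebraic, I must show that $\alpha$ is quadratic. For each good index $n$ I consider the integer point $\mathbf{x}_n=(q_n,p_n,p_{n-1})$ together with the three linear forms $M_1=\alpha X_1-X_2$, $M_2=\alpha X_2-X_3$, and $M_3=X_1$, whose coefficients are algebraic precisely because $\alpha$ is. These forms are linearly independent, since their coefficient determinant equals $\pm 1$. Using $|q_k\alpha-p_k|\asymp q_{k+1}^{-1}$ together with the palindromic relation $p_n=q_{n-1}$, the first two forms evaluate to quantities of size $\asymp q_{n+1}^{-1}$ and $\asymp q_n^{-1}$, while $M_3(\mathbf{x}_n)=q_n\asymp H(\mathbf{x}_n)$. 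Hence
\[
|M_1(\mathbf{x}_n)\,M_2(\mathbf{x}_n)\,M_3(\mathbf{x}_n)|\;\asymp\;\frac{1}{q_{n+1}}\;\ll\;H(\mathbf{x}_n)^{-\delta}
\]
for a fixed $\delta\in(0,1)$. The Subspace Theorem then confines all but finitely many of the $\mathbf{x}_n$ to a finite union of proper rational subspaces, so one such subspace contains $\mathbf{x}_n$ for infinitely many $n$; that is, there are fixed integers $(c_1,c_2,c_3)\neq(0,0,0)$ with $c_1q_n+c_2p_n+c_3p_{n-1}=0$ for infinitely many $n$.

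Finally I would extract an algebraic equation for $\alpha$. Dividing this relation by $q_n$ and letting $n\to\infty$ along the relevant subsequence, I use $p_n/q_n\to\alpha$ and, via $q_{n-1}=p_n$, the limit $p_{n-1}/q_n=(p_{n-1}/q_{n-1})(q_{n-1}/q_n)\to\alpha^2$, obtaining $c_1+c_2\alpha+c_3\alpha^2=0$. A short case analysis on $(c_1,c_2,c_3)$ rules out the degenerate possibilities, which would force $\alpha$ rational contrary to hypothesis, and leaves $c_3\neq0$, so $\alpha$ satisfies a nonzero quadratic over $\mathbb{Z}$ and is therefore quadratic. This shows that an algebraic palindromic $\alpha$ is quadratic, which is the contrapositive form of the stated dichotomy.

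I expect the main obstacle to lie not in the Subspace Theorem input, where the height and product bounds above are comfortable, but in this last step and in the passage to the generalized setting: one must check that the limiting linear relation is a genuine nondegenerate quadratic rather than a vacuous identity, and, when the palindromes are only approximate, one must track the resulting error terms so that the product of the forms still decays like a negative power of the height. Controlling those error terms uniformly, and ensuring that the subsequence along which the $c_i$ are constant still produces the correct limits $\alpha$ and $\alpha^2$, is where the real care is needed.
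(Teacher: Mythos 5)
Your overall strategy—turn the palindromic symmetry into an exact relation among numerators and denominators of convergents, and feed the resulting simultaneous approximations to $\alpha$ and $\alpha^2$ into a Diophantine theorem—is the same mechanism the paper uses, but there is a genuine error at precisely the step you identify as the crux. The correct mirror relation, which you state in your first paragraph, is $q_n=p_{n-1}$; the relation $p_n=q_{n-1}$ that you actually use in your second and third paragraphs is not merely a relabeling but is impossible: a palindromic prefix forces $a_0\geq 1$ (in the combinatorial definition $a_0=a_n\geq 1$; in the paper's definition the reversal must be a legitimate partial-quotient sequence), hence $p_n>q_n\geq q_{n-1}$. With the correct relation, your point $\mathbf{x}_n=(q_n,p_n,p_{n-1})$ fails twice over. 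First, $M_2(\mathbf{x}_n)=\alpha p_n-p_{n-1}=\alpha p_n-q_n$, which grows like $q_n$ rather than decaying like $q_n^{-1}$, so the product bound collapses. Second, and more fundamentally, $q_n=p_{n-1}$ says that every $\mathbf{x}_n$ already lies on the fixed rational hyperplane $X_1-X_3=0$, so the Subspace Theorem's conclusion is vacuous for these points: the linear relation it hands you can be the trivial one $q_n-p_{n-1}=0$, and indeed $p_{n-1}/q_n\equiv 1$ (not $\to\alpha^2$), so no quadratic equation for $\alpha$ comes out of the limit.

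The repair is to re-index the point: take $\mathbf{y}_n=(q_{n-1},p_{n-1},p_n)$ instead. Then $M_1(\mathbf{y}_n)=\alpha q_{n-1}-p_{n-1}\asymp q_n^{-1}$, while $M_2(\mathbf{y}_n)=\alpha p_{n-1}-p_n=\alpha q_n-p_n\asymp q_{n+1}^{-1}$ (this is where the palindromic relation enters), and $M_3(\mathbf{y}_n)=q_{n-1}$, so $\left|M_1M_2M_3(\mathbf{y}_n)\right|\ll q_n^{-1}\ll H(\mathbf{y}_n)^{-\delta}$ since $H(\mathbf{y}_n)\asymp p_n\asymp q_n$; moreover $p_n/q_{n-1}=(p_n/q_n)(p_{n-1}/q_{n-1})\to\alpha^2$, so your limiting relation $c_1+c_2\alpha+c_3\alpha^2=0$ and the nondegeneracy analysis go through. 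Once corrected, your argument is valid but uses strictly heavier machinery than the paper: the proof of Theorem \ref{T:main} (which contains this statement as the case $m=1$) applies Schmidt's 1967 result on simultaneous approximation of $\alpha$ and $\alpha^2$ (Theorem \ref{T:Schmidt}) directly to the same triples $(p_{n-1},q_{n-1},p_n)$, needing only the elementary estimate $|\alpha^2-p_n/q_{n-1}|<(1+3\alpha)/q_{n-1}^2$ and no appeal to the full Subspace Theorem; avoiding the Subspace Theorem is exactly the ``theoretically simpler'' point the paper emphasizes in Section \ref{S:stammering}. Note also that for the definition of $1$-palindromic used here the relation $q_n=p_{n-1}$ is exact, so the error-tracking for ``approximate palindromes'' that you flag as the main obstacle in your final paragraph never arises.
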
 

We generalize the above result by introducing \textit{$m$-palindromic} continued fractions, where $m$ ranges over the positive integers. Instead of exhibiting the mirror symmetry of palindromes ($1$-palindromes in our terminology), an $m$-palindromic continued fraction exhibits a sort of multiplicative symmetry (which we will precisely define in Section \ref{S:prelims}). Our main result is the following theorem.

\begin{theorem}\label{T:main}
Let $\alpha=[a_0,a_1,\dots]$ denote an irrational continued fraction. If there exists $m\in\mathbb{N}$ such that $\alpha$ is $m$-palindromic, then $\alpha$ is either 
\begin{enumerate}
  \item{transcendental, or}
  \item{a quadratic irrational such that $\alpha/m$ is reduced.}
\end{enumerate}
\end{theorem}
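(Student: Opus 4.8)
\section*{Proof proposal}

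The plan is to prove the dichotomy directly: assuming $\alpha$ is algebraic, I will show it must be a quadratic irrational with $\alpha/m$ reduced, so that together with the irrationality hypothesis the two alternatives are exhausted. The engine of the argument is the classical continuant (mirror) identity
$$\frac{p_n}{p_{n-1}} = [a_n, a_{n-1},\dots, a_1, a_0],\qquad \frac{q_n}{q_{n-1}} = [a_n, a_{n-1},\dots, a_1],$$
relating the convergents $p_n/q_n$ of $\alpha$ to the reversed partial quotients. The first task is to unwind the definition of $m$-palindromicity from Section \ref{S:prelims}: the multiplicative symmetry should assert that, for infinitely many $n$, the reversed prefix $[a_n,\dots,a_0]$ equals $[a_0,\dots,a_n]$ rescaled by a factor built from $m$. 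Substituting this into the mirror identity converts the combinatorial symmetry into an arithmetic relation between $p_{n-1}$, $q_n$, and $m$, with the case $m=1$ recovering the palindromic relation $p_{n-1}=q_n$. This translation is the crucial first step, and the place where the factor $m$ must be tracked with care.

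With such a relation available at infinitely many indices, I would attach to each good index the matrix $M_n=\begin{pmatrix} p_n & p_{n-1}\\ q_n & q_{n-1}\end{pmatrix}$ and the integer quadratic $P_n(X)=q_nX^2+(q_{n-1}-p_n)X-p_{n-1}$, whose roots are the fixed points of the M\"obius map induced by $M_n$. Since $M_n$ agrees with $\alpha$ on a long prefix, $|P_n(\alpha)|$ is extremely small while the height of $P_n$ is comparable to $q_n$ (the $m$-twisted relation controlling the constant term $-p_{n-1}$ precisely in terms of $q_n$), so the roots $\beta_n$ of $P_n$ are quadratic irrationals with $|\alpha-\beta_n|\ll q_n^{-2}$. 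I would then run the Subspace Theorem argument in the now-standard form of \cite{AB07}: assuming $\deg\alpha\ge 3$, evaluate three linearly independent linear forms in $(\alpha^2,\alpha,1)$ at the integer coefficient vectors of the $P_n$ and estimate the product of the forms to be $\ll(\max_j|x_{n,j}|)^{-\delta}$ along the good indices. Schmidt's Subspace Theorem then confines these vectors to finitely many proper subspaces, forcing a single nontrivial quadratic relation on $\alpha$ and contradicting $\deg\alpha\ge 3$. Hence $\alpha$ is either transcendental or, being irrational, a quadratic irrational.

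In the quadratic case I would identify the minimal polynomial of $\alpha$ with a rational multiple of the stabilized $P_n$ and read off the conjugate $\bar\alpha$ from the sum and product of roots; the $m$-twisted relation between $p_{n-1}$ and $q_n$ is exactly what pins these down so that $\alpha/m>1$ and $-1<\overline{\alpha/m}<0$. Equivalently, the multiplicative mirror symmetry forces the continued fraction of $\alpha/m$ to be purely periodic, so $\alpha/m$ is reduced by Galois' theorem. I expect the main obstacle to be the very first step together with the bookkeeping here at the end: one must extract \emph{precisely} the right arithmetic relation from the $m$-palindromic definition so that the height and smallness estimates feeding the Subspace Theorem, and the final reducedness computation, all come out cleanly, since the factor $m$ enters the constant term of $P_n$ and must be carried consistently through every estimate.
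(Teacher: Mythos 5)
Your opening step is exactly right and matches the paper: by Lemma \ref{L:basic}, $m$-palindromicity of the prefix $(a_0,\dots,a_n)$ is equivalent to the relation $p_{n-1}=mq_n$. The gap is in the smallness claim that drives your Subspace Theorem argument. For the fixed-point polynomial $P_n(X)=q_nX^2+(q_{n-1}-p_n)X-p_{n-1}$ one computes, writing $\alpha_{n+1}=[a_{n+1},a_{n+2},\dots]$ for the complete quotient,
\[
P_n(\alpha)=\alpha(q_n\alpha-p_n)+(q_{n-1}\alpha-p_{n-1})=(-1)^n\,\frac{\alpha-\alpha_{n+1}}{q_n\alpha_{n+1}+q_{n-1}},
\]
so that $q_n\,|P_n(\alpha)|\geq\frac{1}{2}\left|1-\alpha/\alpha_{n+1}\right|$. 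Hence $|P_n(\alpha)|$ can be substantially smaller than $H(P_n)^{-1}\asymp q_n^{-1}$ only if $\alpha_{n+1}\to\alpha$ along your indices, i.e.\ only if the expansion nearly restarts at position $n+1$. That is precisely the \emph{stammering} condition of \cite{AB05}, and it is not implied by $m$-palindromicity: the relation $p_{n-1}=mq_n$ fixes the constant term of $P_n$ but forces no cancellation in the display, and Section \ref{S:stammering} of the paper exhibits $m$-palindromes with no large repetitive patterns at all. Since $|P(\alpha)|\ll H(P)^{-1}$ is achievable for \emph{every} real number, your polynomials carry no diophantine information: the product of your three forms at the coefficient vectors of $P_n$ has size roughly $q_n\cdot q_n\cdot q_n^{-1}=q_n$, which grows, so the Subspace Theorem cannot be invoked.

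What $p_{n-1}=mq_n$ actually buys --- and what the paper uses --- is \emph{simultaneous} rational approximation of $\alpha$ and $\alpha^2$ with the common denominator $q_{n-1}$: since $\alpha^2\approx(p_n/q_n)(p_{n-1}/q_{n-1})=mp_n/q_{n-1}$, the triples $(p_{n-1},q_{n-1},mp_n)$ satisfy $|\alpha-p_{n-1}/q_{n-1}|<q_{n-1}^{-2}$ and $|\alpha^2-mp_n/q_{n-1}|\ll q_{n-1}^{-2}$, and Schmidt's theorem on simultaneous approximation (Theorem \ref{T:Schmidt}, from \cite{Schmidt67}) with any exponent $w\in(3/2,2)$ gives the dichotomy transcendental-or-quadratic; avoiding the Subspace Theorem \cite{Schmidt72} is one of the stated advantages of Theorem \ref{T:main}. (A Subspace argument can be salvaged, but with different data: the vectors $(q_{n-1},q_n,p_n)$ and the forms $\alpha X_1-mX_2$, $\alpha X_2-X_3$, $X_1$, whose product is $\ll q_n^{-1}$; the resulting rational relation $c_1q_{n-1}+c_2q_n+c_3p_n=0$ then yields $c_3\alpha^2+c_2\alpha+mc_1=0$.) Your quadratic-case endgame inherits the same defect: after normalizing, $P_n/q_n\to(X-\alpha)(X+\theta)$ where $\theta=\lim q_{n-1}/q_n$, an identity valid for every real number and with irrational coefficients in general, so no algebraic conjugate can be read off from sums and products of its roots. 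The route that works is the one you mention only in passing, and it is the paper's: Lagrange gives $A=U\overline{W}$; along palindromic indices $[a_{i_k},\dots,a_0]=[a_0,\dots,a_{i_k}]/m\to\alpha/m$; the reversed prefixes converge to a purely periodic expansion; and Galois' theorem then shows $\alpha/m$ is reduced.
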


Via results from \cite{Burger05} we prove the following corollary to Theorem \ref{T:main} in the case that $m=1$. 

\begin{coro}\label{C:m_one}
Let $\alpha=[a_0,a_1,\dots]$ denote an algebraic irrational continued fraction. If $\alpha$ is $1$-palindromic, then $\alpha$ is a reduced quadratic irrational that is equivalent to its algebraic conjugate. 
\end{coro}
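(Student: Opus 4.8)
The plan is to combine Theorem \ref{T:main} with the classical theory of reduced quadratic irrationals. Since $\alpha$ is algebraic and irrational, alternative (1) of Theorem \ref{T:main} is excluded, so applying that theorem with $m=1$ forces alternative (2): $\alpha$ is a quadratic irrational for which $\alpha = \alpha/1$ is reduced. Thus $\alpha$ is already a reduced quadratic irrational, and it remains only to show that such an $\alpha$, being $1$-palindromic, is equivalent to its algebraic conjugate $\bar\alpha$.

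By Galois' theorem a quadratic irrational is reduced precisely when its continued fraction expansion is purely periodic, so I may write $\alpha = [\overline{a_0, a_1, \ldots, a_{k-1}}]$ and set $p = a_0 a_1 \cdots a_{k-1}$ for the period word. Galois' theorem further yields $-1/\bar\alpha = [\overline{a_{k-1}, \ldots, a_1, a_0}]$, whose period is the reversal $\tilde p$ of $p$. Because the map $x \mapsto -1/x$ lies in $SL_2(\mathbb{Z})$, we have $\bar\alpha \sim -1/\bar\alpha$; and since two purely periodic continued fractions are equivalent exactly when their periods are cyclic rotations of one another, it suffices to prove that $p$ is a cyclic rotation of its reversal $\tilde p$.

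The crux is therefore to deduce from $1$-palindromicity that $p$ is conjugate as a word to $\tilde p$; equivalently, that $p$ is a product of two palindromes, or that some cyclic shift of $p$ is itself a palindrome. This is exactly where I invoke the results of \cite{Burger05}: the $1$-palindromic hypothesis supplies arbitrarily long palindromic prefixes of the periodic partial-quotient sequence, and for a purely periodic word this property forces a factorization $p = uv$ with $\tilde u = u$ and $\tilde v = v$. Given such a factorization, $\tilde p = \tilde v\,\tilde u = vu$ is a cyclic rotation of $p = uv$, which closes the argument and gives $\alpha \sim \bar\alpha$.

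The main obstacle is this last step: carefully matching the asymptotic definition of $1$-palindromicity (infinitely many palindromic prefixes of the full partial-quotient word, with the attendant bookkeeping about the index $a_0$ and the offset conventions fixed in Section \ref{S:prelims}) to the purely finite combinatorial statement that the minimal period is conjugate to a palindrome. Pure periodicity should make the ``infinitely many prefixes'' hypothesis collapse to a single condition on $p$, but verifying that collapse, and confirming that the palindrome factorization furnished by \cite{Burger05} is compatible with our normalization of $\alpha$ as reduced, is where the real care is needed.
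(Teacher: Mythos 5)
Your opening matches the paper's proof exactly: Theorem \ref{T:main} with $m=1$ rules out transcendence, gives that $\alpha=\alpha/1$ is a reduced quadratic irrational, and Galois' theorem then makes the expansion purely periodic, $A=\overline{W}$. Your proposed ending is also legitimate and genuinely different from the paper's: where the paper, once it knows $W$ is a product of one or two $1$-palindromes, simply invokes Burger's criterion (Theorem \ref{T:Burger}), you would instead observe that a factorization $W=uv$ into palindromes makes $\widetilde{W}=vu$ a cyclic rotation of $W$, and then combine Serret's theorem (equivalence of purely periodic continued fractions whose periods are rotations of one another) with Galois' formula $-1/\bar\alpha=[\overline{\widetilde{W}}]$ to get $\alpha\sim\bar\alpha$ directly. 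That bypasses Burger's theorem entirely, using only classical facts, which is a nice economy.

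The genuine gap is the step you defer to ``the results of \cite{Burger05}'': deducing from $1$-palindromicity that $W$ factors as a product of at most two palindromes. Burger's result, as recorded in the paper as Theorem \ref{T:Burger}, is only the equivalence between ``$\alpha$ is equivalent to its conjugate'' and ``the period consists of one or two $1$-palindromes''; it says nothing about sequences with infinitely many palindromic prefixes, so it cannot furnish the factorization you need, and citing it for that purpose is circular in spirit (it is the target criterion, not a tool for producing the factorization). This combinatorial deduction is precisely the content of the paper's proof, and it is short but must be done: take a palindromic prefix $A_k=W^jW'$ with $j\geq1$ and $W'$ a prefix of $W$. If $W'=W$, comparing $W^{j+1}$ with its reversal $\widetilde{W}^{j+1}$ forces $W=\widetilde{W}$, so the period is a single palindrome. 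If $W'\neq W$, the reversal of the prefix is $\widetilde{W'}\widetilde{W}^j$, so $\widetilde{W'}$ is a prefix of $W$ of the same length as $W'$, forcing $W'=\widetilde{W'}$; writing $W=W'W''$ and comparing the remaining terms forces $W''=\widetilde{W''}$, so $W$ is a product of two palindromes. You explicitly flag this verification as ``where the real care is needed'' but never carry it out, so as written the proposal is incomplete at its crux.
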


In Example \ref{X:non_equivalent}, we construct a quadratic irrational $2$-palindromic continued fraction $\alpha$ such that neither $\alpha$ nor $\alpha/2$ is equivalent to its algebraic conjugate, thus demonstrating that Corollary \ref{C:m_one} does not extend in a straightforward way to $m$-palindromes for $m\geq2$.

While constructing examples to illustrate the above results, we  discuss the relationship between $m$-palindromes and \textit{stammering} continued fractions as defined and studied in the work of Adamczewski and Bugeaud (see \cite[Section 2]{AB05}, for example). In other words, we discuss the extent to which the partial quotients of an $m$-palindrome exhibit large repetitive patterns. Our examples reveal stronger differences between $m$-palindromes ($m\geq 2$) and stammering continued fractions than exist between $1$-palindromes and stammering continued fractions, particularly in the case of positive palindromic density (cf.\,\cite[Section 7]{AB07}).

Section \ref{S:prelims} provides the basic definitions and notation that we will use. Section \ref{S:lemmas} records several auxiliary results with which we will construct examples and prove Theorem \ref{T:main}. In Section \ref{S:zeros} we pause to define and discuss extended continued fractions. In Section \ref{S:main_result}, we prove Theorem \ref{T:main}. In Section \ref{S:stammering} we discuss the relationship between $m$-palindromes and stammering continued fractions. Section \ref{S:maximal} contains a conjecture regarding a generalization of palindromic density. Finally, in Section \ref{S:equivalent}, we prove Corollary \ref{C:m_one}.

\section{Basic Definitions and Notation}\label{S:prelims}

Write $\mathbb{N}$ to denote the set of positive integers $\{1,2,3,\dots\}$, and $\mathbb{N}_0$ to denote the non-negative integers $\{0,1,2,3,\dots\}$. Given $i\in\mathbb{N}_0$, a \textit{(rational) continued fraction} is denoted by 
$[a_0,a_1,\dots,a_i]$, where $a_0\in\mathbb{N}_0$ and, for each $k$ such that $1\leq k\leq i$, we have $a_k\in\mathbb N$. For each $k$ such that $0\leq k\leq i$, the numbers $a_k$ are called \textit{partial quotients}. This definition easily extends to irrational continued fractions of the form $[a_0,a_1,\dots]$ corresponding to infinite sequences of partial quotients.  

It is convenient to use the language of sequences to define continued fractions, where the terms of the sequences correspond to partial quotients. Given any set $\mathcal{A}$, we write $\mathcal{A}^*$ to denote the collection of finite sequences composed of elements of $\mathcal{A}$ (here we include the empty sequence $\varepsilon$). For example, we write $\mathbb{N}^*$ and $\mathbb{N}_0^*$ to denote the collections of finite sequences composed of positive and non-negative integers, respectively. Thus we write $\mathbb{N}_0\times\mathbb{N}^*$ to denote the collection of finite sequences of the form $(a_0,a_1,\dots,a_i)$ such that $a_0\in\mathbb{N}_0$ and, for each $1\leq k\leq i$, we have $a_k\in\mathbb{N}$. For $A\in\mathcal{A}^*$, we write $|A|$ to denote the number of terms in $A$, and say that $A$ has \textit{length} $|A|$. For two sequences $A$ and $B$ in $\mathcal{A}^*$, we write $AB$ to denote the concatenation of $A$ and $B$. In this case we say that $A$ is a \textit{prefix} for the sequence $AB$. Given $A\in\mathcal{A}^*$ and $k\in \mathbb{N}$, we write $A^k$ to denote the concatenation of $k$ copies of $A$. We write $\overline{A}$ to denote the infinite sequence $AAA\dots$, and describe such a sequence as \textit{periodic}, with \textit{period} $A$. If there exist $B,C\in\mathcal{A}^*$ such that $A=B\overline{C}$, then we say that $A$ is \textit{eventually periodic}.

The notation $\mathbb{N}^{\mathbb{N}_0}$ denotes infinite sequences of the form $(a_0,a_1,a_2,\dots)$ in which every term is a positive integer. For example, given $A\in \mathbb{N}^*$, we have $\overline{A}\in\mathbb{N}^{\mathbb{N}_0}$. We write $\mathbb{N}_0\times\mathbb{N}^\mathbb{N}$ to denote infinite sequences of the form $(a_0,a_1,a_2,\dots)$, where $a_0\in \mathbb{N}_0$ and, for each $k\in \mathbb{N}$, we have $a_k\in\mathbb{N}$. 

Given a sequence $A\in\mathbb N^*$ and any positive real number $x$, we denote by $A^x$ the sequence $A^{\lfloor x \rfloor}A'$, where $A'$ is a prefix of $A$ of length $\left\lceil (x- \lfloor x\rfloor)|A|\right\rceil$. Here $\lceil x\rceil$ denotes the smallest integer greater than or equal to $x$, and $\lfloor x\rfloor$ denotes the largest integer less than or equal to $x$.

We remind the reader of the bijective correspondence between infinite sequences $A\in\mathbb{N}_0\times\mathbb{N}^\mathbb{N}$ and their  corresponding continued fractions $[A]$. However, given two sequences $A$ and $B$ in $\mathbb{N}_0\times\mathbb{N}^*$, it may be the case that $A\not= B$ while $[A]=[B]$. For example, $[1,1,1]=[1,2]$. To avoid ambiguities, we will take special care to distinguish between a given sequence of partial quotients $A\in\mathbb{N}_0\times\mathbb{N}^*$ and the corresponding continued fraction $[A]$. 

Given $A=(a_0,\dots,a_i)\in\mathbb{N}_0\times\mathbb{N}^*$, for each $k$ such that $0\leq k \leq i$, we refer to the numbers $[a_0,\dots,a_k]$ as \textit{convergents}. We write $[a_0,\dots,a_k]=p_k/q_k$, such that $p_k$ and $q_k$ have no common factors. Thus each sequence $(a_0,\dots,a_i)$ yields a sequence of convergents $(p_0/q_0, p_1/q_1, \dots, p_i/q_i)$.

Given $A=(a_0,\dots,a_i)\in\mathbb{N}^*$, the \textit{reversal} of $A$ is defined as $\widetilde A = (a_i,\dots,a_0)$. If there exists a number $m\in\mathbb{N}$ such that $[A]=m[\widetilde{A}]$, we say that $A$ is an \textit{$m$-palindrome}, or \textit{$m$-palindromic}. Note that when $m=1$, we recover the definition of a palindromic sequence as used in \cite{AB07}. In the case of an irrational continued fraction $\alpha=[A]=[a_0,a_1,\dots]$, we say that $\alpha$ (equivalently, $A$) is an $m$-palindrome, or $m$-palindromic, if there exist infinitely many indices $i$ for which the prefix  $(a_0,\dots,a_i)$ is an $m$-palindrome.

\section{Auxiliary Results}\label{S:lemmas}

Given $i,j\in\mathbb{N}$, our notational convention in this section shall be to write $A=(a_0,\dots,a_i)\in\mathbb{N}_0\times\mathbb{N}^*$ and $B=(b_0,\dots,b_j)\in\mathbb{N}_0\times\mathbb{N}^*$. Furthermore, we shall write $(p_0/q_0,\dots,p_i/q_i)$ and $(r_0/s_0,\dots,r_j/s_j)$ to denote the sequences of convergents corresponding to $A$ and $B$, respectively. The convergents corresponding to $\widetilde{A}$ and $\widetilde{B}$ shall be denoted by $(\tilde p_0/\tilde q_0,\dots,\tilde p_i/\tilde q_i)$ and $(\tilde r_0/\tilde s_0,\dots,\tilde r_j/\tilde s_j)$, respectively. 

The majority of this section (and the paper as a whole) will rely on the following matrix identity.

\begin{lemma}\label{L:matrix}
Given $A=(a_0,\dots,a_i)\in\mathbb{N}_0\times\mathbb{N}^*$, we have
\begin{equation}\label{E:matrix_id}
\left(\begin{array}{ll}
			p_i & p_{i-1} \\
			q_i & q_{i-1} \end{array}\right)=%
				\left(\begin{array}{ll}
						a_0 & 1 \\
						1   & 0 \end{array}\right)%
				\left(\begin{array}{ll}
						a_1 & 1 \\
						1   & 0 \end{array}\right)%
				\cdots
				\left(\begin{array}{ll}
						a_i & 1 \\
						1   & 0 \end{array}\right).
						\end{equation}
\end{lemma}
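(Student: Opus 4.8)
The plan is to prove the matrix identity \eqref{E:matrix_id} by induction on $i$, the length of the sequence $A$ (minus one). This is the standard and cleanest route, since the recurrence relations for continued fraction convergents translate directly into a right-multiplication by one $2\times 2$ matrix at each step. First I would set up notation by recalling the fundamental recurrences $p_k = a_k p_{k-1} + p_{k-2}$ and $q_k = a_k q_{k-1} + q_{k-2}$ for $k\geq 1$, together with the initial data $p_0 = a_0$, $q_0 = 1$, and the conventional values $p_{-1} = 1$, $q_{-1} = 0$. These conventions are precisely what make the base case work: with $i=0$ the left-hand side of \eqref{E:matrix_id} reads $\left(\begin{smallmatrix} p_0 & p_{-1} \\ q_0 & q_{-1}\end{smallmatrix}\right) = \left(\begin{smallmatrix} a_0 & 1 \\ 1 & 0\end{smallmatrix}\right)$, which matches the single-factor right-hand side.

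For the inductive step, I would assume the identity holds for the prefix $(a_0,\dots,a_{i-1})$, so that the product of the first $i$ matrices equals $\left(\begin{smallmatrix} p_{i-1} & p_{i-2} \\ q_{i-1} & q_{i-2}\end{smallmatrix}\right)$. The key computation is then to multiply this matrix on the right by the final factor $\left(\begin{smallmatrix} a_i & 1 \\ 1 & 0\end{smallmatrix}\right)$ and verify that the result is $\left(\begin{smallmatrix} p_i & p_{i-1} \\ q_i & q_{i-1}\end{smallmatrix}\right)$. Carrying out the multiplication, the top-left entry becomes $a_i p_{i-1} + p_{i-2}$, which is exactly $p_i$ by the recurrence; the top-right entry is $p_{i-1}$; and the bottom row behaves identically with the $q$'s. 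This is a routine $2\times 2$ product, so I would state it compactly rather than belabor each entry.

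The one point requiring a small amount of care, rather than genuine difficulty, is the treatment of the coprimality condition. The statement writes convergents as $p_k/q_k$ with $p_k$ and $q_k$ having no common factor, yet the recurrence-defined numerators and denominators are automatically in lowest terms precisely because $p_k q_{k-1} - p_{k-1} q_k = (-1)^{k+1}$ (the determinant identity, which also follows from taking determinants of both sides of \eqref{E:matrix_id}, since each elementary factor has determinant $-1$). So I would either invoke this standard fact or note that the determinant relation forces $\gcd(p_k,q_k)=1$, confirming that the matrix entries genuinely are the reduced convergents named in the hypothesis. I do not anticipate a real obstacle here; the whole proof is a direct induction, and the main thing to get right is simply the bookkeeping of the index conventions $p_{-1}=1$, $q_{-1}=0$ so that the base case aligns.
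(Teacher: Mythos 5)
Your induction is correct and is the standard proof of this identity; note, however, that the paper itself offers no argument at all here---its ``proof'' is the citation \cite[Lemma 2.8]{BPSZ14}---so any honest proof is, in that sense, different from the paper's. One caveat about self-containment is worth making explicit. The paper defines $p_k/q_k$ as the convergent $[a_0,\dots,a_k]$ written in lowest terms, and you ``recall'' the recurrences $p_k=a_kp_{k-1}+p_{k-2}$, $q_k=a_kq_{k-1}+q_{k-2}$ \emph{for these reduced numerators and denominators}. That recurrence is precisely the substantive content of the lemma (your own inductive step shows the matrix identity is a trivial repackaging of it), so under this reading you have reduced the lemma to an equivalent standard fact---which is legitimate, and no weaker than the paper's citation, but it should be acknowledged as such. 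If instead you want a fully self-contained argument, the clean route is the one you half-take: \emph{define} sequences $P_k,Q_k$ by the recurrence with $P_{-1}=1$, $Q_{-1}=0$, so that your induction gives \eqref{E:matrix_id} with $P_k,Q_k$ in place of $p_k,q_k$; then two things must be checked to identify $P_k,Q_k$ with the paper's $p_k,q_k$. You correctly handle one of them: taking determinants of the product of elementary factors (each of determinant $-1$) gives $P_kQ_{k-1}-P_{k-1}Q_k=(-1)^{k+1}$, hence $\gcd(P_k,Q_k)=1$. The other, which your write-up does not address, is that $P_k/Q_k=[a_0,\dots,a_k]$ as rational numbers; this is the step that genuinely needs an argument, typically an induction in which the last partial quotient is allowed to be an arbitrary positive real so that one may use $[a_0,\dots,a_{k-1},a_k]=[a_0,\dots,a_{k-2},a_{k-1}+1/a_k]$, or equivalently an induction that prepends $a_0$ via $[a_0,a_1,\dots,a_k]=a_0+1/[a_1,\dots,a_k]$. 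With that one step added (or with the recurrence for reduced convergents explicitly invoked as a known theorem), your proof is complete.
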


\begin{proof}
See, for example, \cite[Lemma 2.8]{BPSZ14}.
\end{proof}

Lemma \ref{L:matrix} leads to the following two results of which we will make frequent use.

\begin{lemma}\label{L:basic}
Given $i,m\in\mathbb{N}$ and $A=(a_0,\dots,a_i)\in\mathbb{N}^*$, we have 
\begin{equation}\label{E:transpose}
\left(\begin{array}{ll}
			p_i     & q_i \\
			p_{i-1} & q_{i-1} \end{array}\right)=%
		\left(\begin{array}{ll}
			\tilde{p}_i & \tilde{p}_{i-1} \\
			\tilde{q}_i & \tilde{q}_{i-1} \end{array}\right).
\end{equation}
Furthermore, $A$ is $m$-palindromic if and only if
\begin{equation}\label{E:mpal}
\left(\begin{array}{ll}
			p_i		& mq_i \\
			p_{i-1}	& q_{i-1}\\ \end{array}\right)=%
		\left(\begin{array}{ll}
			\tilde p_i	& \tilde q_i \\
			m\tilde p_{i-1} & \tilde q_{i-1}\end{array}\right).
\end{equation}
Equivalently, $A$ is $m$-palindromic if and only if $mq_i=p_{i-1}$.
\end{lemma}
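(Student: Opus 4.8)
The plan is to extract both claims directly from the matrix identity of Lemma~\ref{L:matrix}. Write $M(A)$ for the ordered product of elementary matrices appearing on the right-hand side of \eqref{E:matrix_id}, so that by Lemma~\ref{L:matrix} the matrix $M(A)$ equals the convergent matrix on the left-hand side. To prove \eqref{E:transpose}, I would observe that each of the elementary factors making up $M(A)$ is a symmetric matrix, so transposing the whole product reverses the order of the factors while fixing each one. Since $\widetilde A=(a_i,\dots,a_0)$ is precisely the sequence whose associated product is this reversed product, Lemma~\ref{L:matrix} gives $M(A)^{\mathrm{T}}=M(\widetilde A)$. Reading off entries, the left-hand side is the transpose of the convergent matrix of $A$ and the right-hand side is the convergent matrix of $\widetilde A$, which is exactly \eqref{E:transpose}.

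The four entrywise equalities in \eqref{E:transpose} then read $\tilde p_i=p_i$, $\tilde p_{i-1}=q_i$, $\tilde q_i=p_{i-1}$, and $\tilde q_{i-1}=q_{i-1}$. In particular $[\widetilde A]=\tilde p_i/\tilde q_i=p_i/p_{i-1}$, while $[A]=p_i/q_i$. Before comparing these values, I would record that both are already in lowest terms: taking determinants in \eqref{E:matrix_id} yields $p_iq_{i-1}-p_{i-1}q_i=(-1)^{i+1}$, and this single relation forces both $\gcd(p_i,q_i)=1$ and $\gcd(p_i,p_{i-1})=1$. Moreover, since $a_0\ge 1$ we have $p_i\ge 1>0$.

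With this in hand, the defining condition $[A]=m[\widetilde A]$ becomes $p_i/q_i=m\,p_i/p_{i-1}$. Cancelling the nonzero common numerator $p_i$ shows this is equivalent to $p_{i-1}=mq_i$, which is the final assertion of the lemma; the cancellation is reversible, so this is a genuine equivalence rather than a one-sided implication. This reduced-fraction point, namely that the entries furnished by \eqref{E:transpose} already represent $[A]$ and $[\widetilde A]$ in lowest terms, is exactly what licenses promoting the equality of values to the equality $p_{i-1}=mq_i$, and I expect it to be the only step requiring genuine care.

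Finally, I would check that \eqref{E:mpal} is merely a repackaging of the scalar condition $mq_i=p_{i-1}$. Substituting the identifications from \eqref{E:transpose} into the right-hand matrix of \eqref{E:mpal}, its $(1,1)$ and $(2,2)$ entries coincide with those of the left-hand matrix unconditionally, by the always-valid relations $p_i=\tilde p_i$ and $q_{i-1}=\tilde q_{i-1}$. After the same substitution, the $(1,2)$ and $(2,1)$ entries each reduce to the single equation $mq_i=p_{i-1}$. Hence \eqref{E:mpal} holds if and only if $mq_i=p_{i-1}$, if and only if $A$ is $m$-palindromic, completing the chain of equivalences.
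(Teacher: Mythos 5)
Your proposal is correct and follows essentially the same route as the paper: both derive \eqref{E:transpose} by transposing the product in \eqref{E:matrix_id} (using that each elementary factor is symmetric, so transposition reverses the order and produces the matrix for $\widetilde A$), and both then obtain the $m$-palindromic equivalences by substituting the entry identifications $\tilde p_i=p_i$, $\tilde q_i=p_{i-1}$, $\tilde p_{i-1}=q_i$, $\tilde q_{i-1}=q_{i-1}$ and cancelling $p_i\neq 0$. Your side remark on coprimality via the determinant is harmless but not actually needed, since cross-multiplication and cancellation of $p_i$ already give the equivalence, which is how the paper proceeds.
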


\begin{proof}
Given $[A]=p_i/q_i$, via Lemma \ref{L:matrix} one can write 
\[\left(\begin{array}{ll}
			p_i & p_{i-1} \\
			q_i & q_{i-1} \end{array}\right)=%
				\left(\begin{array}{ll}
						a_0 & 1 \\
						1   & 0 \end{array}\right)%
				\left(\begin{array}{ll}
						a_1 & 1 \\
						1   & 0 \end{array}\right)%
				\cdots
				\left(\begin{array}{ll}
						a_i & 1 \\
						1   & 0 \end{array}\right).
\]
This equality then yields 
\[\left(\begin{array}{ll}
			p_i     & q_i \\
			p_{i-1} & q_{i-1} \end{array}\right)=%
		\left(\begin{array}{ll}
			p_i & p_{i-1} \\
			q_i & q_{i-1} \end{array}\right)^t=%
		\left(\begin{array}{ll}
			\tilde{p}_i & \tilde{p}_{i-1} \\
			\tilde{q}_i & \tilde{q}_{i-1} \end{array}\right).\]
Here we write $M^t$ for the transpose of a matrix $M$. Thus we verify (\ref{E:transpose}).

Assume $m[\widetilde A]=[A]$, so that $m\tilde p_i/\tilde q_i=p_i/q_i$. Since $p_i=\tilde{p}_i$, it follows that $\tilde{q}_i=mq_i$. Finally, we have $p_{i-1}=\tilde q_i=mq_i=m\tilde p_{i-1}$. Conversely, if (\ref{E:mpal}) is true, then $p_i/q_i=m\tilde p_i/\tilde q_i$, and $m[\widetilde A]=[A]$. We also note that if $mq_i=p_{i-1}$, then $m\tilde p_i/\tilde q_i=mp_i/p_{i-1}=p_i/q_i$.
\end{proof}

\begin{lemma}\label{L:split}
Given $A=(a_0,\dots,a_i)\in\mathbb{N}_0\times\mathbb{N}^*$ and $B=(b_0,\dots,b_j)\in\mathbb{N}^*$, we have
\[[AB]=\frac{p_i[B]+p_{i-1}}{q_i[B]+q_{i-1}}=\frac{p_ir_j+p_{i-1}s_j}{q_ir_j+q_{i-1}s_j}.\]
\end{lemma}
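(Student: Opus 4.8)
The plan is to exploit the fact that the matrix identity of Lemma \ref{L:matrix} turns concatenation of partial-quotient sequences into multiplication of matrices. Writing the concatenation as $AB=(a_0,\dots,a_i,b_0,\dots,b_j)$ and applying Lemma \ref{L:matrix} to this full sequence, the right-hand side of (\ref{E:matrix_id}) is a product of elementary matrices indexed by the partial quotients. By associativity this product splits as the product over the $A$-block times the product over the $B$-block, and a second application of Lemma \ref{L:matrix} to each block identifies these two factors as the convergent matrices $M_A$ (with first column $(p_i,q_i)^t$ and second column $(p_{i-1},q_{i-1})^t$) and $M_B$ (with first column $(r_j,s_j)^t$ and second column $(r_{j-1},s_{j-1})^t$).

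Next I would read off the value $[AB]$. Lemma \ref{L:matrix} identifies the top-left and bottom-left entries of the matrix attached to a sequence as the numerator and denominator of its continued-fraction value, so $[AB]$ is the ratio of the $(1,1)$ to the $(2,1)$ entry of $M_AM_B$. A direct multiplication shows the first column of $M_AM_B$ to be $(p_ir_j+p_{i-1}s_j,\;q_ir_j+q_{i-1}s_j)^t$, which gives the second claimed equality. For the first equality I would substitute $[B]=r_j/s_j$ into the expression $(p_i[B]+p_{i-1})/(q_i[B]+q_{i-1})$ and clear the factor $s_j$ from numerator and denominator; this is legitimate because $B\in\mathbb{N}^*$ forces $s_j\geq 1>0$.

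The step I would treat most carefully is the passage from ``matrix product'' to ``continued-fraction value'': one must be sure that $[AB]$ equals the ratio of the first-column entries of the \emph{unreduced} product $M_AM_B$, not of some further reduction of it. This is safe, since $\det(M_AM_B)=\det M_A\cdot\det M_B=\pm1$ forces the two first-column entries to be coprime, so that no hidden cancellation occurs and the displayed fraction is in fact already in lowest terms. Because the lemma asserts only an equality of values, this coprimality is reassuring but not strictly required; the computation of the first column of $M_AM_B$ is what does the essential work.
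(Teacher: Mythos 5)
Your proof is correct and takes essentially the same route as the paper: the paper's proof is a one-line appeal to Lemma \ref{L:matrix} (with details deferred to \cite[Lemma 2.11]{BPSZ14}), and your argument---splitting the elementary-matrix product for $AB$ by associativity into the convergent matrices of $A$ and $B$, reading off the first column, and then substituting $[B]=r_j/s_j$---is precisely the derivation being invoked. Your closing determinant/coprimality remark is a sound extra safeguard, though, as you note, unnecessary since only equality of values is claimed.
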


\begin{proof}
This follows from Lemma \ref{L:matrix} (see \cite[Lemma 2.11]{BPSZ14}).
\end{proof}

We now prove a series of lemmas that will provide methods for constructing $m$-palindromes.

\begin{lemma}\label{L:construct}
For $i,j,m\in\mathbb{N}$, suppose $A=(a_0,\dots,a_i)\in\mathbb{N}^*$ and $B=(b_0,\dots,b_j)\in\mathbb{N}^*$ satisfy $m[\widetilde A]=[B]$. The sequence $BA$ is $m$-palindromic if and only if $m[a_i,\dots,a_1]=[b_0,\dots,b_{j-1}]$.
\end{lemma}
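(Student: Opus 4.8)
The plan is to reduce the statement to the simple numerical test for $m$-palindromicity in Lemma \ref{L:basic}, using the matrix identity of Lemma \ref{L:matrix} to access the convergents of the concatenation $BA$. Let $M_A$ and $M_B$ denote the convergent matrices of $A$ and $B$ provided by (\ref{E:matrix_id}). Since concatenating sequences corresponds to multiplying these matrices, applying Lemma \ref{L:matrix} to the full sequence $BA$ shows that
\[
\left(\begin{array}{ll} P_n & P_{n-1}\\ Q_n & Q_{n-1}\end{array}\right)=M_BM_A=\left(\begin{array}{ll} r_j & r_{j-1}\\ s_j & s_{j-1}\end{array}\right)\left(\begin{array}{ll} p_i & p_{i-1}\\ q_i & q_{i-1}\end{array}\right),
\]
where $n=i+j+1$ and $P_k/Q_k$ denotes the $k$-th (reduced) convergent of $BA$. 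By the final clause of Lemma \ref{L:basic}, $BA$ is $m$-palindromic if and only if $mQ_n=P_{n-1}$, which upon expanding the matrix product becomes $m(s_jp_i+s_{j-1}q_i)=r_jp_{i-1}+r_{j-1}q_{i-1}$.

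Next I would rewrite both the hypothesis and the target equality as integer identities in the entries of $M_A$ and $M_B$. The transpose identity (\ref{E:transpose}) gives $\tilde p_i=p_i$, $\tilde q_i=p_{i-1}$, $\tilde p_{i-1}=q_i$, and $\tilde q_{i-1}=q_{i-1}$, so that $[\widetilde A]=p_i/p_{i-1}$ while $[a_i,\dots,a_1]=\tilde p_{i-1}/\tilde q_{i-1}=q_i/q_{i-1}$. Since $[B]=r_j/s_j$ and $[b_0,\dots,b_{j-1}]=r_{j-1}/s_{j-1}$, cross-multiplication converts the hypothesis $m[\widetilde A]=[B]$ into the identity $mp_is_j=r_jp_{i-1}$, and converts the sought condition $m[a_i,\dots,a_1]=[b_0,\dots,b_{j-1}]$ into $mq_is_{j-1}=r_{j-1}q_{i-1}$.

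The heart of the argument is then a single cancellation: substituting the hypothesis $r_jp_{i-1}=mp_is_j$ into the criterion $m(s_jp_i+s_{j-1}q_i)=r_jp_{i-1}+r_{j-1}q_{i-1}$ cancels the matching term $ms_jp_i$ on both sides and leaves exactly $ms_{j-1}q_i=r_{j-1}q_{i-1}$, the cross-multiplied form of the target. Because $i,j\geq1$ force $q_{i-1}\geq1$ and $s_{j-1}\geq1$, every cross-multiplication above is reversible, so these steps form a genuine chain of equivalences and the lemma follows. I expect the only delicate part to be the bookkeeping of the first two steps — taking the product in the order $M_BM_A$ rather than $M_AM_B$, correctly matching each matrix entry to its reduced convergent, and invoking (\ref{E:transpose}) to express the reversed-prefix continued fractions $[\widetilde A]$ and $[a_i,\dots,a_1]$ through $p_i,p_{i-1},q_i,q_{i-1}$ — after which the hypothesis performs the entire reduction in one line.
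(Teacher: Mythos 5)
Your proof is correct and follows essentially the same route as the paper's: both rest on the matrix identity for the concatenation $BA$, the transpose identity (\ref{E:transpose}), and the same single cancellation of $ms_jp_i=r_jp_{i-1}$ supplied by the hypothesis, ending at the identical integer identity $mq_is_{j-1}=r_{j-1}q_{i-1}$. The only cosmetic difference is that you invoke the criterion $mQ_n=P_{n-1}$ from Lemma \ref{L:basic} applied to the product matrix $M_BM_A$, whereas the paper expands $[BA]$ and $m[\widetilde{A}\widetilde{B}]$ via Lemma \ref{L:split} and compares them directly; these are equivalent formulations of the same computation.
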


\begin{proof}[Proof of Lemma \ref{L:construct}]
By Lemma \ref{L:split}, we may write
\begin{equation*}\label{E:quotients}
[\widetilde A\widetilde B]=\frac{\tilde p_i\tilde{r}_j+\tilde p_{i-1}\tilde{s}_j}{\tilde q_i\tilde{r}_j+\tilde q_{i-1}\tilde{s}_j} \quad \text{ and } \quad [BA]=\frac{r_j{p}_i+r_{j-1}{q}_i}{s_j{p}_i+s_{j-1}{q}_i}.
\end{equation*}
By (\ref{E:transpose}), we have $\tilde p_i\tilde{r}_j+\tilde p_{i-1}\tilde{s}_j=p_ir_j+q_ir_{j-1}$. From our assumption that $m\tilde p_i/\tilde q_i=r_j/s_j$, we have $\tilde q_i\tilde r_j=ms_jp_i$. Via (\ref{E:transpose}), we have $[a_i,\dots,a_1]=\tilde p_{i-1}/\tilde q_{i-1}=q_i/q_{i-1}$. Thus $m[a_i,\dots,a_1]=[b_0,\dots,b_{j-1}]$ if and only if $mq_i/q_{i-1}=r_{j-1}/s_{j-1}$. The equality $mq_i/q_{i-1}=r_{j-1}/s_{j-1}$ is in turn equivalent to $mq_is_{j-1}=r_{j-1}q_{i-1}=\tilde s_j \tilde q_{i-1}$. It follows that, under the stated assumptions, $m[\widetilde A \widetilde B]=[BA]$ if and only if $m[a_i,\dots,a_1]=[b_0,\dots,b_{j-1}]$.
\end{proof}

\begin{example} Given $m,n\in\mathbb{N}$, write $B=(m^2,1)\in\mathbb N^*$ and $A=(m)\in\mathbb N^*$. We note that $m[A^{2n}]=[B^n]$ and $m[A^{2n-1}]=[B^{n-1},m^2]$. By Lemma \ref{L:construct}, $B^nA^{2n}$ is an $m$-palindrome. 
\end{example}

The next lemma provides a means of constructing $m$-palindromic irrational continued fractions with periodic sequences of partial quotients.

\begin{lemma}\label{L:periodic_m_pals}
For $m\in\mathbb N$ and $A\in\mathbb N^*$, if $A$ is an $m$-palindrome, then $A^2$ is an $m$-palindrome.
\end{lemma}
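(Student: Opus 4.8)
The plan is to reduce the entire statement to the numerical criterion of Lemma~\ref{L:basic}: a sequence $C=(c_0,\dots,c_k)\in\mathbb N^*$ is $m$-palindromic if and only if $mq_k=p_{k-1}$, where $p_\bullet/q_\bullet$ denote the convergents of $C$. The structural observation that makes this work is that concatenation of partial-quotient sequences corresponds, via Lemma~\ref{L:matrix}, to multiplication of the associated $2\times 2$ matrices. In particular, if $M=\left(\begin{array}{ll}p_i & p_{i-1}\\ q_i & q_{i-1}\end{array}\right)$ is the matrix attached to $A=(a_0,\dots,a_i)$, then the matrix attached to the doubled sequence $A^2=(a_0,\dots,a_i,a_0,\dots,a_i)$ is exactly $M^2$, since the defining product in \eqref{E:matrix_id} for $A^2$ is the product for $A$ written twice in succession.

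First I would write $A^2=(c_0,\dots,c_{2i+1})$, denote its convergents by $p'_\bullet/q'_\bullet$, and use Lemma~\ref{L:matrix} to identify the entries of $M^2$ with $p'_{2i+1},p'_{2i},q'_{2i+1},q'_{2i}$. Expanding,
\[
M^2=\left(\begin{array}{ll}
p_i^2+p_{i-1}q_i & p_{i-1}(p_i+q_{i-1})\\
q_i(p_i+q_{i-1}) & q_ip_{i-1}+q_{i-1}^2
\end{array}\right),
\]
so that the two entries relevant to the criterion of Lemma~\ref{L:basic} are $q'_{2i+1}=q_i(p_i+q_{i-1})$ and $p'_{2i}=p_{i-1}(p_i+q_{i-1})$. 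The crux of the argument — really its only idea — is that both entries share the common factor $p_i+q_{i-1}$, namely the trace of $M$. Because $p_i+q_{i-1}\ge p_i\ge 1>0$, the $m$-palindrome criterion $mq'_{2i+1}=p'_{2i}$ for $A^2$ is equivalent, after cancelling this positive factor, to $mq_i=p_{i-1}$, which by Lemma~\ref{L:basic} is precisely the hypothesis that $A$ is $m$-palindromic. This in fact establishes the equivalence ``$A^2$ is $m$-palindromic if and only if $A$ is,'' of which the desired implication is the forward direction.

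I do not anticipate a genuine obstacle here; the delicate points are purely bookkeeping. I would confirm that the entries of $M^2$ are legitimately the reduced convergent data required by Lemma~\ref{L:matrix}: this holds because $\det M=\pm1$ forces $\det M^2=1$, so each column of $M^2$ has coprime entries. I would also verify that the degenerate length-one case $i=0$ (where $A=(a_0)$ and $q_{i-1}=q_{-1}=0$) is covered, since there $p_i+q_{i-1}=a_0\ge 1$ is still positive, so the cancellation step remains valid.
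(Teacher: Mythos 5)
Your proof is correct, but it takes a genuinely different route from the paper's. The paper verifies the definition $m[\widetilde A\widetilde A]=[AA]$ head-on: it expands both sides via Lemma \ref{L:split} and then matches numerators using the transpose identity (\ref{E:transpose}) and matches denominators (up to the factor $m$) using the $m$-palindrome matrix identity (\ref{E:mpal}). Your argument never touches the reversal $\widetilde A$ at all: you square the convergent matrix $M$ of $A$, observe that the two entries entering the scalar criterion of Lemma \ref{L:basic} both carry the trace $p_i+q_{i-1}$ as a factor, and cancel it. This buys you two things the paper's proof does not state: the full equivalence ($A^2$ is $m$-palindromic if and only if $A$ is), and a proof that invokes only the final sentence of Lemma \ref{L:basic} rather than the reversal machinery. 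What the paper's organization buys is uniformity: the same template (expand via Lemma \ref{L:split}, then apply (\ref{E:transpose}) and (\ref{E:mpal})) is reused essentially verbatim for Lemma \ref{L:construct} and for the $ABA$ case in Lemma \ref{L:pert_symm_middle}, where the relevant matrix product is $MNM$ rather than a perfect square and your trace factorization has no direct analogue. Two small remarks: your coprimality check is unnecessary, since Lemma \ref{L:matrix} applied to $A^2$ already identifies the entries of $M^2$ with the convergent data of $A^2$, with no reduction step intervening; and since Lemma \ref{L:basic} is stated for $i\in\mathbb N$, the length-one case $A=(a_0)$ is strictly speaking outside its wording, but as you note the criterion $mq_0=p_{-1}$ (which reads $m=1$) still agrees with the defining equation $[a_0]=m[a_0]$ there, so your handling of that degenerate case is fine.
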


\begin{proof}
Analogous to the proof of Lemma \ref{L:construct}, we need only show that $\tilde p_i \tilde p_i + \tilde p_{i-1} \tilde q_i=p_ip_i+p_{i-1}q_i$ and that $\tilde q_i\tilde p_i+\tilde q_{i-1}\tilde q_i=m(q_ip_i+q_{i-1}q_i)$. These equalities follow from (\ref{E:transpose}) and (\ref{E:mpal}), respectively.
\end{proof}

\begin{remark}
Suppose $A=(a_0,\dots,a_i)\in\mathbb{N}^*$ is an $m$-palindrome. Lemma \ref{L:periodic_m_pals} and Lemma \ref{L:construct} imply that $m[a_i,\dots,a_1]=[a_0,\dots,a_{i-1}]$.
\end{remark}

\begin{example}\label{X:short_pal}
For $m,n\in\mathbb N$, we note that $(mn,n)$ is $m$-palindromic. It follows from Lemma \ref{L:periodic_m_pals} that the irrational number $[\overline{mn,n}]$ is an $m$-palindrome. 
\end{example}

The final lemma of this section will enable the construction of irrational $m$-palindromic continued fractions with non-periodic sequences of partial quotients.

\begin{lemma}\label{L:pert_symm_middle}
For $m\in\mathbb N$ and $A,B\in\mathbb N^*$, if $A$ and $B$ are $m$-palindromes, then $ABA$ is an $m$-palindrome. 
\end{lemma}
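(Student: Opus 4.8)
The plan is to avoid expanding a triple matrix product entrywise, and instead to recast the $m$-palindrome condition as an intertwining relation that behaves well under composition. For a sequence $C\in\mathbb{N}^*$, write $M_C$ for the convergent matrix appearing on the left-hand side of (\ref{E:matrix_id}), so that by Lemma \ref{L:matrix} concatenation of sequences corresponds to multiplication of convergent matrices (and, by (\ref{E:transpose}), $M_C^{t}$ is the convergent matrix of the reversed sequence $\widetilde C$). Let $D_m$ denote the $2\times2$ diagonal matrix with diagonal entries $1$ and $m$. My first step is to verify the elementary equivalence that $C$ is $m$-palindromic if and only if $M_C^{t}D_m=D_mM_C$. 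Indeed, writing out both products, the two diagonal entries agree automatically, while each off-diagonal entry yields the single condition that the top-right entry of $M_C$ equals $m$ times its bottom-left entry, which is exactly the criterion of Lemma \ref{L:basic} characterizing $m$-palindromicity.

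Granting this reformulation, the proof becomes a short calculation. By Lemma \ref{L:matrix} we have $M_{ABA}=M_AM_BM_A$, hence $M_{ABA}^{t}=M_A^{t}M_B^{t}M_A^{t}$. Since $A$ and $B$ are $m$-palindromic, the relations $M_A^{t}D_m=D_mM_A$ and $M_B^{t}D_m=D_mM_B$ both hold, and I would push $D_m$ leftward through the three transposed factors one at a time:
\[
M_{ABA}^{t}D_m=M_A^{t}M_B^{t}M_A^{t}D_m=M_A^{t}M_B^{t}D_mM_A=M_A^{t}D_mM_BM_A=D_mM_AM_BM_A=D_mM_{ABA}.
\]
By the equivalence of the first step, applied now to the sequence $ABA\in\mathbb{N}^*$, this identity says precisely that $ABA$ is $m$-palindromic, which is the desired conclusion.

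The point to highlight is not a computational obstacle but the structural role of the \emph{symmetric} arrangement $ABA$. The relation $M^{t}D_m=D_mM$ is not stable under arbitrary products: for two matrices satisfying it one obtains only $(MN)^{t}D_m=D_mNM$, with the order of $M$ and $N$ reversed, so $AB$ need not be $m$-palindromic. It is exactly the placement of a copy of $A$ on each side of $B$ that allows $D_m$ to migrate across all three factors and re-emerge on the left. The sole routine verification is the entrywise equivalence of the first step, which is immediate from Lemma \ref{L:basic}; equivalently, one could expand $M_AM_BM_A$ directly and confirm, using the $m$-palindrome conditions on $A$ and $B$, that the top-right entry equals $m$ times the bottom-left entry, the intertwining argument merely organizing this computation.
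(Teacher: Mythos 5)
Your proof is correct, but it is organized differently from the paper's. The paper proves Lemma~\ref{L:pert_symm_middle} computationally: it applies Lemma~\ref{L:split} twice to write out the numerator and denominator of $[\widetilde{A}\widetilde{B}\widetilde{A}]$ explicitly in terms of the convergents of $\widetilde A$ and $\widetilde B$, and then matches terms against $[ABA]$ by repeated use of (\ref{E:transpose}) and (\ref{E:mpal}), in the same entrywise style as the proofs of Lemmas~\ref{L:construct} and~\ref{L:periodic_m_pals}. You instead package the criterion $p_{i-1}=mq_i$ of Lemma~\ref{L:basic} as the intertwining relation $M_C^{t}D_m=D_mM_C$ with $D_m=\mathrm{diag}(1,m)$ (your verification of this equivalence is correct: the diagonal entries agree automatically and both off-diagonal entries reduce to $p_{i-1}=mq_i$), after which the lemma is the one-line computation $M_{ABA}^{t}D_m=M_A^{t}M_B^{t}M_A^{t}D_m=\dots=D_mM_{ABA}$, with no expansion of the triple product at all. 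Your route buys several things the paper's does not make visible: it reproves Lemma~\ref{L:periodic_m_pals} as the degenerate case $B=\varepsilon$ (the identity matrix trivially intertwines), it extends verbatim to longer symmetric words such as $ABCBA$ and, since it is purely a statement about the matrices in (\ref{E:matrix_id}), to the extended continued fractions of Remark~\ref{R:extended}, and it isolates exactly why the symmetric arrangement matters --- your observation that $(MN)^{t}D_m=D_mNM$ reverses the order, so that $AB$ need not be $m$-palindromic, is correct and is consistent with the extra hypothesis the paper must impose in Lemma~\ref{L:construct}. The trade-off is that the paper's argument stays entirely inside its already-established lemmas, whereas yours introduces a new (if elementary) reformulation; both rest on the same underlying facts, namely Lemma~\ref{L:matrix} and the criterion from Lemma~\ref{L:basic}.
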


\begin{proof}
By two applications of Lemma \ref{L:split}, we have
\[[\widetilde{A}\widetilde{B}\widetilde{A}]=\frac{\tilde p_i\tilde r_j\tilde p_i+\tilde p_i\tilde r_{j-1}\tilde q_i+\tilde p_{i-1}\tilde s_j\tilde p_i+\tilde p_{i-1}\tilde s_{j-1}\tilde q_i}{\tilde q_i\tilde r_j\tilde p_i+\tilde q_i\tilde r_{j-1}\tilde q_i+\tilde q_{i-1}\tilde s_j\tilde p_i+\tilde q_{i-1}\tilde s_{j-1}\tilde q_i}.\]
The lemma then follows from repeated applications of Lemma \ref{L:basic}, as in the proofs of Lemma \ref{L:construct} and Lemma \ref{L:periodic_m_pals}.
\end{proof}

\section{Extended Continued Fractions}\label{S:zeros}

In this section we use sequences $B\in\mathbb{N}_0$ to define \textit{extended} continued fractions. This definition will enable the construction of Examples \ref{X:non_purely_stammering} and \ref{X:non_equivalent}.

Let $B=(b_0,\dots,b_j)\in\mathbb{N}_0^*$. In analogy to Lemma \ref{L:matrix}, we write
\begin{equation}\label{E:extended_def}
\left(\begin{array}{ll}
			b_0	& 1 \\
			1	& 0 \end{array}\right)
\dots
\left(\begin{array}{ll}
			b_j	& 1 \\
			1	& 0 \end{array}\right)=
\left(\begin{array}{ll}
			r_j	& r_{j-1} \\
			s_j	& s_{j-1} \end{array}\right)
\end{equation}
and define $[B]=r_j/s_j$ (provided $s_j\not=0$). We again use the term \textit{convergent} to refer to the quotient $r_j/s_j$. The sequences of \textit{continuants} $(r_0,\dots,r_j)$ and $(s_0,\dots,s_j)$  satisfy the recurrence relations
\begin{equation}\label{E:recurrence}
r_k=b_kr_{k-1}+r_{k-2}\qquad s_k=b_ks_{k-1}+s_{k-2}\qquad(k=1,2,\dots,j).
\end{equation}
Here we define $r_{-1}=1$ and $s_{-1}=0$.

Given $B\in\mathbb{N}_0^*$ such that $[B]$ is defined, we now describe how to obtain $B'\in\mathbb{N}_0\times\mathbb{N}^*$ such that $[B]=[B']$. We refer to such $B'$ as a \textit{simplification} of $B$. To this end, let $B=(b_0,\dots,b_j)\in\mathbb{N}_0^*$ be such that $[B]$ is defined. In the case that $b_j=0$, we observe that (\ref{E:recurrence}) implies $[B]=[b_0,\dots,b_{j-2}]$. Here we note that if $j=1$ then $[B]$ is undefined, and thus we may assume that $j\geq2$. This observation may be applied at most finitely many times in order to obtain a sequence $B''\in\mathbb{N}_0^*$ such that $[B]=[B'']$, and such that either the final term of $B''$ is positive or $B''=(0)$. 

Next, we note that, for two integers $x,y\in\mathbb{N}_0$, we have
\begin{equation}\label{E:addition}
\left(\begin{array}{ll}
			x	& 1	\\
			1	& 0 \end{array}\right)
\left(\begin{array}{ll}
			0	& 1	\\
			1	& 0 \end{array}\right)
\left(\begin{array}{ll}
			y	& 1	\\
			1	& 0 \end{array}\right)=
\left(\begin{array}{ll}
			x+y	& 1	\\
			1	& 0 \end{array}\right).
\end{equation}
Thus, given $B''\in\mathbb{N}_0^*$ as above, we may apply (\ref{E:addition}) at most finitely many times to obtain $B'\in\mathbb{N}_0\times\mathbb{N}^*$ such that $[B]=[B'']=[B']$. Thus we obtain a desired simplification of $B$.

\begin{remark}\label{R:extended}
Because extended continued fractions are defined via (\ref{E:extended_def}), it is straightforward to verify that Lemmas \ref{L:basic}, \ref{L:split},  \ref{L:periodic_m_pals}, and \ref{L:pert_symm_middle} are valid for extended continued fractions provided that all extended continued fractions under consideration are well-defined and positive. 
\end{remark}

\section{Criterion for transcendency}\label{S:main_result}   

The proof of Theorem \ref{T:main} closely follows that of Theorem \ref{T:large_pals}. In particular, we rely on the following result of Schmidt (see \cite[Theorem 4.1]{AB07}).

\begin{theorem}[Schmidt; \cite{Schmidt67}]\label{T:Schmidt}
Let $\alpha$ be an irrational real number that is not quadratic. If there exists a real number $w>3/2$ and infinitely many triples of integers $(p,q,r)$ with $q>0$ such that 
\[\max\left\{\left|\alpha-\frac{p}{q}\right|,\left|\alpha^2-\frac{r}{q}\right|\right\}<\frac{1}{q^w},\]
then $\alpha$ is transcendental.
\end{theorem}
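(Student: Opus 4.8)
The plan is to argue by contraposition on algebraicity: assuming $\alpha$ is algebraic, I will show the hypothesized approximations can persist only if $\alpha$ is rational or quadratic. Thus, for $\alpha$ irrational and non-quadratic admitting the approximations infinitely often, $\alpha$ must be transcendental. So suppose for contradiction that $\alpha$ is algebraic; being irrational and not quadratic, its degree over $\mathbb{Q}$ is at least $3$. The main tool is the Schmidt Subspace Theorem (also due to Schmidt), which I would apply to the three linear forms in $\mathbf{x}=(x_1,x_2,x_3)$ given by
$$L_1(\mathbf{x})=x_1,\qquad L_2(\mathbf{x})=\alpha x_1-x_2,\qquad L_3(\mathbf{x})=\alpha^2 x_1-x_3.$$
Since $\alpha$ is algebraic, these have algebraic coefficients, and their coefficient matrix is triangular with determinant $1$, so they are linearly independent. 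I would first observe that the infinitely many triples force $q\to\infty$: for each fixed $q$ only finitely many pairs $(p,r)$ satisfy the inequalities, so infinitely many triples require arbitrarily large $q$.

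Evaluating at $\mathbf{x}=(q,p,r)$, the hypothesis gives $|L_1(\mathbf{x})|=q$, $|L_2(\mathbf{x})|=q|\alpha-p/q|<q^{1-w}$, and $|L_3(\mathbf{x})|=q|\alpha^2-r/q|<q^{1-w}$, whence
$$|L_1(\mathbf{x})L_2(\mathbf{x})L_3(\mathbf{x})|<q^{3-2w}.$$
The decisive point is that $w>3/2$ makes the exponent $3-2w$ strictly negative. Because $p/q\to\alpha$ and $r/q\to\alpha^2$ along the sequence, one has $\|\mathbf{x}\|=\max(|q|,|p|,|r|)\leq Cq$ for a constant $C$ depending only on $\alpha$ and all large $q$; writing $\varepsilon=2w-3>0$ and substituting yields $|L_1L_2L_3|<\|\mathbf{x}\|^{-\varepsilon/2}$ for all sufficiently large $q$, where halving the exponent absorbs the constant $C$. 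The Subspace Theorem then asserts that all but finitely many of the integer points $\mathbf{x}=(q,p,r)$ lie in a single proper rational subspace, so there exist fixed integers $(c_1,c_2,c_3)\neq(0,0,0)$ with $c_1q+c_2p+c_3r=0$ for infinitely many of the triples.

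To finish, I would divide this relation by $q$, obtaining $c_1+c_2(p/q)+c_3(r/q)=0$, and let $q\to\infty$ along the relevant subsequence to get $c_1+c_2\alpha+c_3\alpha^2=0$. This is a nonzero polynomial relation of degree at most $2$ satisfied by $\alpha$: if $c_3\neq0$ it forces $\alpha$ quadratic (not rational, as $\alpha$ is irrational); if $c_3=0$ and $c_2\neq0$ it forces $\alpha$ rational; and $c_2=c_3=0$ contradicts nontriviality. Each outcome contradicts the standing assumption that $\alpha$ is irrational and not quadratic, so $\alpha$ cannot be algebraic, i.e., $\alpha$ is transcendental.

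The main obstacle is conceptual rather than computational: the whole argument rests on the Subspace Theorem, a deep result I would treat as a black box. The remaining delicate points are the uniform comparison $\|\mathbf{x}\|\leq Cq$ (using only that $p/q$ and $r/q$ converge, hence stay bounded) and the passage to the limit inside the subspace relation; both are routine once the forms are chosen so that the critical product exponent matches the threshold $w>3/2$ exactly. I note that Schmidt's original $1967$ argument predates the Subspace Theorem and instead develops the needed simultaneous-approximation estimates directly, but the subspace formulation above is the cleanest route given modern tools.
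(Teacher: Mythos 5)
Your argument is correct, but it is genuinely different from what the paper does: the paper supplies no proof of Theorem \ref{T:Schmidt} at all, quoting it as a black box from Schmidt's 1967 paper \cite{Schmidt67}. Your derivation from the Subspace Theorem \cite{Schmidt72} is the standard modern route and checks out: the forms $L_1,L_2,L_3$ are linearly independent with algebraic coefficients (triangular matrix, determinant $1$), the evaluation at $(q,p,r)$ gives the product bound $q^{3-2w}$ with $3-2w<0$ exactly when $w>3/2$, the comparison $\|\mathbf{x}\|\leq Cq$ follows from boundedness of $p/q$ and $r/q$, and the limiting relation $c_1+c_2\alpha+c_3\alpha^2=0$ rules out every case. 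One small repair: the Subspace Theorem concludes that the solutions lie in \emph{finitely many} proper rational subspaces, not that all but finitely many lie in a single one; you need a pigeonhole step to extract one subspace containing infinitely many of the triples, which your subsequent deduction implicitly performs, so this is a misstatement rather than a gap. The more interesting point is what the two routes buy. Schmidt's original 1967 argument, as you note, predates the Subspace Theorem and develops the simultaneous-approximation estimates directly, so the quoted theorem is logically lighter than the Subspace Theorem; the paper exploits exactly this in Section \ref{S:stammering}, remarking that the stammering criterion \cite[Theorem 1]{AB05} ``relies on the Schmidt Subspace Theorem,'' so that Theorem \ref{T:main} is a \emph{theoretically simpler} transcendency criterion for examples such as $\tau_A(B)$. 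If one proves Theorem \ref{T:Schmidt} your way, that comparative advantage evaporates, since both criteria would then rest on the same deep input; your route is the cleanest given modern tools, but the paper's citation of the 1967 result is a deliberate choice, not merely an omitted proof.
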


\begin{proof}[Proof of Theorem \ref{T:main}]
Suppose $\alpha=[A]$ for $A=(a_0,a_1,\dots)\in\mathbb N^{\mathbb{N}_0}$. Let $i$ denote an index for which $m[a_i,\dots,a_0]=[a_0,\dots,a_i]$. By Lemma \ref{L:basic}, we note that $p_{i-1}=mq_i$. We then observe that 
\[\left|\alpha^2-\frac{mp_i}{q_{i-1}}\right|=\left|\alpha^2-\frac{p_i}{q_{i}}\frac{p_{i-1}}{q_{i-1}}\right|\leq\left|\alpha-\frac{p_i}{q_i}\right|\left|\alpha+\frac{p_{i-1}}{q_{i-1}}\right|+\frac{\alpha}{q_{i}q_{i-1}},\]
where the final inequality follows from the fact that 
\begin{align*}
\left(\alpha-\frac{p_i}{q_i}\right)\left(\alpha+\frac{p_{i-1}}{q_{i-1}}\right)&=\alpha^2+\alpha\left(\frac{p_{i-1}}{q_{i-1}}-\frac{p_i}{q_i}\right)-\frac{p_i}{q_i}\frac{p_{i-1}}{q_{i-1}}\\
&=\alpha^2+\frac{\alpha(-1)^i}{q_{i}q_{i-1}}-\frac{p_i}{q_i}\frac{p_{i-1}}{q_{i-1}}
\end{align*}
(see \cite[Corollary 2.15]{BPSZ14}). Since, $|\alpha-p_i/q_i|\leq 1$, we have
\[\left|\alpha^2-\frac{mp_i}{q_{i-1}}\right|<(1+2\alpha)\left|\alpha-\frac{p_i}{q_i}\right|+\frac{\alpha}{q_iq_{i-1}}.\]
By \cite[Theorem 2.25]{BPSZ14}, we know that $|\alpha-p_i/q_i|<1/(q_iq_{i-1})$, which yields
\begin{equation}\label{E:schmidt}
\left|\alpha^2-\frac{mp_i}{q_{i-1}}\right|<\frac{1+3\alpha}{q_iq_{i-1}}<\frac{1+3\alpha}{q_{i-1}^2}.
\end{equation}

Let $w$ denote any real number strictly between $3/2$ and $2$. There exists an index $i_0$ (determined by $\alpha$ and the choice of $w$) such that any index $i$ satisfying (\ref{E:schmidt}) such that $i\geq i_0$ will also satisfy
\begin{equation}\label{E:goal}
\left|\alpha^2-\frac{mp_i}{q_{i-1}}\right|<\frac{1}{q_{i-1}^w}.
\end{equation}
By assumption, there are infinitely many indices $i\geq i_0$ satisfying (\ref{E:schmidt}), and thus (\ref{E:goal}). Since we have $|\alpha-p_{i-1}/q_{i-1}|<1/q_{i-1}^2<1/q_{i-1}^w$ (see again \cite[Theorem 2.25]{BPSZ14}), we appeal to Theorem \ref{T:Schmidt} in light of the triples $(p_{i-1},q_{i-1},mp_i)$ to conclude that $\alpha$ is transcendental or quadratic.

For the remainder of the proof we assume that $\alpha=[A]$ is a quadratic irrational. By Lagrange's Theorem, $A$ is eventually periodic. In other words, there exist sequences $U,W\in\mathbb N^*$ such that $A=U\overline W$. Here we can assume that $U$ does not end in a copy of $W$. To prove that $\alpha/m$ is reduced, we proceed as follows. By assumption, there exists a subsequence $i_k\to+\infty$ such that, for each $k\in\mathbb N$, we have $m[a_{i_k},\dots,a_0]=[a_0,\dots,a_{i_k}]$. Since $[a_0,\dots,a_{i_k}]\to\alpha$ as $k\to+\infty$, we also have $[a_{i_k},\dots,a_0]\to\alpha/m$ as $k\to+\infty$. By taking a subsequence of the subsequence $(i_k)_{k=1}^{+\infty}$ (if necessary) we can assume that, for every $k\in\mathbb N$, we have $[a_0,\dots,a_{i_k}]=[UW^{j_k}W']$, where $j_k$ is a non-decreasing sequence of positive integers tending to $+\infty$, and $W'$ is a prefix of $W$. Here we allow for the possibility that $W'=W$. Since $j_k\to+\infty$, taking a limit yields $\widetilde A=X\overline Y$, where we write $X=\widetilde W'$ and $Y=\widetilde W$.  

Suppose $W=(w_1,w_2,\dots,w_n)$ and $W'=(w_1,\dots,w_r)$ for some $r\leq n$. If $r=n$, then $\widetilde A$ is purely periodic, with period $Y=\widetilde W$. If $r<n$, then $\widetilde A$ is purely periodic, with period $Z=(w_r,\dots,w_1,w_n,\dots,w_{r+1})$. In either case, Galois' theorem allows us to conclude that $[\widetilde A]=[A]/m$ is reduced.
\end{proof}

To construct transcendental $m$-palindromes, we utilize Lemma \ref{L:pert_symm_middle} along with \textit{generalized perturbed symmetries} (see \cite{Mendes81}, \cite{AS98}). Given $A\in\mathbb N^*$, we define the generalized perturbed symmetry $S_A:\mathbb N^*\to\mathbb N^*$ to be $S_A(X)=XAX$. Given a non-decreasing sequence of positive integers $n_k$, we also define $S_{A,k}:\mathcal{\mathbb N}^*\to\mathcal{\mathbb N}^*$ to be 
\begin{equation}\label{E:perturbed_S}
S_{A,k}(X)=XA^{n_k}X.
\end{equation}
and $T_{A,k}(X)=S_{A,k}\circ S_{A,k-1}\circ\dots\circ S_{A,1}(X)\in\mathbb N^*$. Set $\tau_{A,k}(X)=[T_{A,k}(X)]$, and write $\tau_A(X)=[T_A(X)]$ to denote the irrational continued fraction such that, for every $k\in\mathbb N$, the sequence $T_{A,k}(X)$ is a prefix of $T_A(X)$. Note that $T_A(X)$ is well defined because, for any $Y\in\mathbb N^*$, the sequence $S_{A,k}(Y)$ begins in $Y$. By Lemmas \ref{L:periodic_m_pals} and \ref{L:pert_symm_middle}, for each $k\in\mathbb N$, the sequence $T_{A,k}(X)$ is an $m$-palindrome if $A,X\in\mathbb N^*$ are $m$-palindromes. Therefore, the same is true of $T_A(X)$.

We note that $\tau_A(X)$ is quite similar to the continued fractions produced by \textit{generalized perturbed symmetry systems} as defined in \cite[Section 7]{ABD06}. Yet, since the collection $\{S_{A,k}\}_{k=1}^{+\infty}$ need not be finite, $\tau_A(X)$ need not fall within the immediate purview of \cite[Theorem 7.1]{ABD06}.

We also note that continued fractions of the form $\tau_A(X)$ are \textit{Maillet-Baker} continued fractions, as described in \cite[Section 3]{AB10}. Therefore, by satisfying certain conditions on the sequence $\{n_k\}_{k=1}^{+\infty}$ used in the construction of $\tau_A(B)$, continued fractions of the form $\tau_A(B)$ can be analyzed via \cite[Theorem 3.1]{AB10}. However, these conditions are not always satisfied, as in Example \ref{X:ST_number}.

\begin{example}\label{X:ST_number}
We now construct an irrational $2$-palindrome of the form $\tau_A(B)$ that does not satisfy the assumptions of \cite[Theorem 3.1]{AB10} or of \cite[Theorem 7.1]{ABD06}. In particular, we will see that $n_k/|T_{A,k-1}(B)|\to0$ as $k\to+\infty$ (thus failing the assumptions of \cite[Theorem 3.1]{AB10}), and the construction of $\tau_A(B)$ will require an infinite collection of generalized perturbed symmetries (thus failing the assumptions of \cite[Theorem 7.1]{ABD06}). 

To this end, let $A=(2,1)$ and $B=(2,1,1,3,1)$. Note that both $A$ and $B$ are $2$-palindromic. Define the sequence $\{n_k\}_{k=1}^{+\infty}$ as in (\ref{E:perturbed_S}) to be $n_k=k$. Then we have 
\begin{align*}
\tau_A(B)&=[T_A(B)]=[BABAABABAAABABAABAB\dots]\\
&=[2,1,1,3,1,2,1,2,1,1,3,1,2,1,2,1,2,1,1,3,1,2,1,\dots].
\end{align*}
By the remarks above, $\tau_A(B)$ is a $2$-palindrome. Furthermore, it is straightforward to verify that $|T_{A,k-1}(B)|>2^{k-1}-1$, and so we have $n_k/|T_{A,k-1}(B)|<k/(2^{k-1}-1)\to0$ as $k\to+\infty$. We also note that the collection $\{S_{A,k}\}_{k=1}^{+\infty}$ consists of infinitely many distinct generalized perturbed symmetries. To verify that $\tau_A(B)$ is transcendental, we observe that, given any $k\in\mathbb N$, the sequence $T_A(B)$ contains a copy of $A^k$. It follows that $T_A(B)$ is not eventually periodic, and thus $\tau_A(B)$ is not quadratic. Theorem \ref{T:main} then implies that $\tau_A(B)$ is transcendental.
\end{example}

\section{Stammering Continued Fractions}\label{S:stammering}

In this section we investigate the relationship between $m$-palindromes and so-called \textit{stammering} continued fractions, as defined in the work of Adamczewski and Bugeaud. We focus on the following three  versions of stammering continued fractions as described in \cite{AB05}, \cite{ABD06}, and \cite{AB10}. 

\begin{definition}
For any real number $w>1$, we say that an irrational continued fraction $\alpha=[A]$ satisfies Condition $(*)_w$ provided that $\alpha$ is not quadratic and there exist sequences $V_k\in\mathbb N^*$ such that
\begin{enumerate}
  \item{for $k\in\mathbb N$, the sequence $V_k^w$ is a prefix of $A$, and}
  \item{the sequence $\{|V_k|\}_{k=1}^{+\infty}$ is strictly increasing.}
\end{enumerate}
\end{definition}

\begin{definition}
For any real number $w>1$, we say that an irrational continued fraction $\alpha=[A]$ satisfies Condition $(*)_{\widehat w}$ provided that $\alpha$ is not quadratic and there exist sequences $V_k\in\mathbb N^*$ such that 
\begin{enumerate}
 \item{for $k\in\mathbb N$, the sequence $V_k^w$ is a prefix of $A$,}
 \item{the sequence $\{|V_{k+1}|/|V_k|\}_{k=1}^{+\infty}$ is bounded, and}
  \item{the sequence $\{|V_k|\}_{k=1}^{+\infty}$ is strictly increasing.}
\end{enumerate}
\end{definition} 

\begin{definition}\label{D:double_star}
For any real numbers $w>1$ and $w'\geq0$, we say that an irrational continued fraction $\alpha=[A]$ satisfies Condition $(**)_{w,w'}$ provided that $\alpha$ is not quadratic and there exist sequences $U_k,V_k\in\mathbb N^*$ such that 
\begin{enumerate}
  \item{for $k\in\mathbb N$, the sequence $U_kV_k^w$ is a prefix of $A$,}
  \item{the sequence $\{|U_k|/|V_k|\}_{k=1}^{+\infty}$ is bounded above by $w'$, and}
  \item{the sequence $\{|V_k|\}_{k=1}^{+\infty}$ is strictly increasing.}
\end{enumerate}
\end{definition}

We note that the continued fraction $\tau_A(B)$ from Example \ref{X:ST_number} satisfies Condition $(*)_{\widehat w}$ for  $w=2$ (cf.\,\cite[p.\,887]{AB10}). Therefore, it can be analyzed via \cite[Theorem 1]{AB05} (see also \cite[Theorem 2.1]{AB10}). However, \cite[Theorem 1]{AB05} relies on the Schmidt Subspace Theorem (see \cite{Schmidt72}), and so Theorem \ref{T:main} provides a theoretically simpler transcendency criterion for $\tau_A(B)$.

In \cite[Section 7]{AB07}, Adamczewski and Bugeaud point out that any non-quadratic irrational $1$-palindromic continued fraction that has positive palindromic density satisfies Condition $(*)_w$ for some $w>1$. In order to confirm that the analogous statement is \textit{not} true for $m$-palindromes when $m\geq2$, we define \textit{$m$-palindromic density} as follows. Given any irrational $m$-palindromic continued fraction $\alpha=[A]$, let $\{P_k(A)\}_{k=0}^{+\infty}$ denote a sequence of $m$-palindromic prefixes $(a_0,\dots,a_{i_k})$ for $A$ such that, for each $k\in\mathbb{N}$, there is no index $j$ such that $i_{k-1}<j<i_{k}$ and for which $(a_0,\dots,a_j)$ is an $m$-palindrome. Define the \textit{$m$-palindromic density} of $\alpha$ to be the number
\[d_m(\alpha)=\limsup_{k\to+\infty}\frac{|P_k(A)|}{|P_{k+1}(
A)|}.\]

\begin{example}\label{X:non_purely_stammering}
Here we provide an example of a transcendental $2$-palindrome with bounded partial quotients and positive $2$-palindromic density that does not satisfy Condition $(*)_w$ for any $w>1$. To this end, define the $2$-palindromic sequences $C=(1,1,0)$ and $D=(2,1)$. For each $k\in\mathbb{N}$, define $U_k=T_{C,k}(D)\in\mathbb{N}_0^*$ using the sequence $n_k=k$ as in (\ref{E:perturbed_S}). Note that, for each $k\in\mathbb{N}$, the extended continued fractions $[U_k]$ and $[\widetilde U_k]$ are well-defined and positive. Therefore, by Remark \ref{R:extended}, we may apply Lemma \ref{L:pert_symm_middle} to conclude that, for each $k\in \mathbb{N}$, the sequence $U_k$ is a $2$-palindrome. We claim that the simplification of each $U_k$ remains $2$-palindromic. To verify this claim, define $E=(1,1,3,1)\in\mathbb{N}^*$, and, for each $k\in\mathbb{N}$, define $F_k=(1,2,\dots,2,3)\in\mathbb{N}^*$. Here the middle $k$ terms of $F_k$ are all equal to $2$. For each $k\in \mathbb{N}$, we use $E$ and $F_k$ to define
\[G_k=(2,S_{F_k}\circ\dots\circ S_{F_1}(E))\in\mathbb N^*.\] 
A straightforward induction argument confirms that, for each $k\in \mathbb{N}$, the sequences $G_k$ and $\widetilde{G}_k$ are simplifications of $U_{k+1}$ and $\widetilde{U}_{k+1}$, respectively. In particular, for each $k\in\mathbb{N}$, we have $2[\widetilde{G}_k]=2[\widetilde{U}_{k+1}]=[U_{k+1}]=[G_k]$.

Let $G\in\mathbb{N}^{\mathbb{N}_0}$ denote the sequence such that, for every $k\in\mathbb{N}$, the sequence $G_k$ is a prefix for $G$. Note that $G$ is well-defined because each $G_{k+1}$ begins with $G_k$. Since, for every $k\in\mathbb{N}$, the sequence $G_k$ is a $2$-palindrome, we conclude that the sequence $G$ is also a $2$-palindrome.  

To see that $[G]$ has positive $2$-palindromic density, we first note that, for each $k\in\mathbb N$, we have $|G_k|>2^k$. Furthermore, $|G_k|=2|G_{k-1}|+O(k)$. It follows that $|G_k|/|G_{k+1}|\to1/2$ as $k\to+\infty$, and so $d_2([G])\geq1/2$.

Fix $k\in\mathbb{N}$, and let $X$ denote a particular appearance of $C$ in $T_{C,k+1}$. The \textit{maximal string of $C$'s} containing $X$ is defined to be the longest sequence of the form $C^j$ containing $X$, where $j\in\mathbb{N}$. Clearly, every maximal string of $C$'s is followed by a copy of $D$. Furthermore, it can be inductively verified that every appearance of $D$ in $T_{C,k+1}(D)$ is preceded by a copy of $C$ (except for the the first appearance of $D$). Therefore, we conclude that the only appearance of the sequence $(2,1)$ in the simplification $G_k$ occurs in its first two terms. It follows that $[G]$ does not satisfy Condition $(*)_w$ or $(*)_{\widehat w}$ for any real number $w>1$. 
\end{example}

\begin{example}
Next, we provide an example of a transcendental $2$-palindrome that fails to satisfy Condition $(*)_w$, $(*)_{\widehat w}$, or $(**)_{w,w'}$ for any real numbers $w>1$ and $w'\geq0$. In other words, we exhibit a $2$-palindrome that exhibits no large repetitive patterns. To this end, let $\{l_k\}_{k=1}^{+\infty}$ denote a strictly increasing sequence of positive integers. For each $k\geq2$, define
\begin{align*}
B_k&=\prod_{i=1}^{d_k}(2(l_{k-1}+i),l_{k-1}+i)\prod_{i=0}^{d_k-1}(2(l_k-i),l_k-i)\\
&=(b^{(k)}_0,b^{(k)}_1,\dots,b^{(k)}_{r_k})\in\mathbb N^*.
\end{align*}
Here $d_k=l_k-l_{k-1}$ and, for $X_i\in\mathbb N^*$, the product $\prod_{i=1}^n X_i$ denotes $X_1\dots X_n\in\mathbb N^*$. Note that, if $i\not=j$, the set of odd-index terms in $B_i$ is disjoint from the set of odd-index terms in $B_j$.

By Lemmas \ref{L:periodic_m_pals} and \ref{L:pert_symm_middle}, each sequence $B_k$ is a $2$-palindrome. For each $k\geq3$, define
\[T_k=S_{B_k}\circ S_{B_{k-1}}\circ \dots\circ S_{B_3}(B_2).\]
Again referring to Lemmas \ref{L:periodic_m_pals} and \ref{L:pert_symm_middle}, each $T_{k}$ is a $2$-palindrome. Since each $T_k$ is a prefix of $T_{k+1}$, the sequence $\{[T_k]\}_{k=1}^{+\infty}$ converges to a $2$-palindromic continued fraction denoted by $[T]$. Since the sequence $T$ contains arbitrarily large terms, Theorem \ref{T:main} tells us that $[T]$ is transcendental.

We now verify that $[T]$ does not satisfy Condition $(*)_w$, $(*)_{\widehat w}$, or $(**)_{w,w'}$ for any real numbers $w>1$ and $w'\geq0$, provided that the sequence $\{l_k\}_{k=1}^{+\infty}$ grows fast enough. Indeed, set $l_1=1$ and $l_2=2$. For $k\geq 3$, recursively define $l_k$ so that $|B_k|\geq2^k|T_{k-1}|$. Let $V$ denote any finite sequence contained in the sequence $T$. Let $k\in\mathbb N$ denote the smallest index such that $V$ is contained in a copy of $T_k$. We denote this copy of $T_k$ by $X$, and write $X=YUY$, where $Y$ is a copy of $T_{k-1}$ and $U$ is a copy of $B_k$. We note that $V$ intersects $U$, else the index $k$ is not minimal. If $V$ does not contain $U$, then $V$ cannot exhibit nontrivial repetition in $T$. This is because nontrivial repetition requires $V$ to begin and end in distinct copies of $B_i$, for some $i\in\mathbb N$. Therefore, we can assume that $V$ contains $U$. Since $X$ is succeeded by a copy of $B_{k+j}$ (for some $j\geq1$), and $X$ does not contain a copy of $B_{k+j}$, we conclude that any repeated prefix of $V$, denoted by $V'$, must be contained in the second copy of $Y$ in $X$. These observations imply that
\[\frac{|V'|}{|V|}\leq\frac{|Y|}{|U|}=\frac{|T_{k-1}|}{|B_k|}\leq\frac{1}{2^k}.\]
As $|V|\to+\infty$, we have $k\to+\infty$ in the above inequality. It follows that $[T]$ does not satisfy Condition $(*)_w$, $(*)_{\widehat w}$, or $(**)_{w,w'}$ for any $w>1$, $w'\geq0$. 
\end{example}

\section{Maximal $m$-Palindromic Density}\label{S:maximal}

It has been shown by Fischler in \cite{Fischler06} that the Fibonacci sequence $(a,b,a,a,b,a,b,a,a,b,a,a,b,\dots)$ possesses maximal $1$-palindromic density amongst all infinite sequences that are not eventually periodic. Moreover, the $1$-palindromic density of the Fibonacci word is equal to $1/\varphi$, where $\varphi$ is the golden ratio. In this section, we construct $m$-palindromic sequences (for $m\geq2$) whose $m$-palindromic densities are at least $1/\varphi$. We conjecture that, in analogy with the case $m=1$, these sequences achieve the maximal $m$-palindromic density amongst all $m$-palindromic infinite sequences that are not eventually periodic. 

Let $m\in\mathbb{N}$ be fixed. As in Example \ref{X:short_pal}, given any $r,s\in\mathbb{N}$ such that $r\not=s$, the words $(rm,r)$ and $(sm,s)$ are $m$-palindromes. Given $a,b\in\mathbb{N}$ and any infinite sequence $A\in\{a,b\}^{\mathbb{N}_0}$, one may form a new sequence by replacing each occurrence of $a$ with the sequence $(rm,r)$, and each occurrence of $b$ with the sequence $(sm,s)$. Denote this new sequence by $A(m,r,s)$. 

Let $F$ denote the Fibonacci sequence in $\{a,b\}^{\mathbb{N}_0}$. So $F=(a,b,a,a,b,a,b,a,a,b,a,\dots)$, and
\[F(m,r,s)=(rm,r,sm,s,rm,r,rm,r,sm,s,rm,r,sm,s,rm,r,rm,r,sm,s,rm,r,\dots).\]
We claim that $d_m([F(m,r,s)])\geq 1/\varphi$. To verify this claim, for $n\in\mathbb{N}_0$, let $S_n$ denote the $n^{th}$ prefix of the Fibonacci sequence. The first few such finite sequences are as follows: 
\[\begin{array}{ll}
S_0=(a) & S_3=(a,b,a,a,b)\\
S_1=(a,b) & S_4=(a,b,a,a,b,a,b,a)\\
S_2=(a,b,a) & S_5=(a,b,a,a,b,a,b,a,a,b,a,a,b)\\
\end{array}\]
Let $S_n^*$ denote the sequence $S_n$ truncated by the final two digits. It is well known that, for all $n\geq2$, the sequence $S_n^*$ is a $1$-palindrome. Therefore, it follows from Lemmas \ref{L:periodic_m_pals} and \ref{L:pert_symm_middle} that each sequence $S_n^*(m,r,s)$ is an $m$-palindrome. Furthermore, $|S_n^*(m,r,s)|=2|S_n^*|=2f_{n+2}-4$. Here $f_n$ denotes the $n^{th}$ Fibonacci number. Since each $S_n^*(m,r,s)$ is a prefix to $S_{n+1}^*(m,r,s)$, we conclude that 
\[d_m([F(m,r,s)])\geq\lim_{n\to+\infty}\frac{|S_n^*(m,r,s)|}{|S_{n+1}^*(m,r,s)|}=\lim_{n\to+\infty}\frac{2f_{n+2}-4}{2f_{n+3}-4}=\frac{1}{\varphi}.\]

\begin{conjecture}
For $m\in\mathbb{N}$ and $A\in\mathbb{N}^{\mathbb{N}_0}$ an $m$-palindromic sequence that is not eventually periodic, we have $d_m([A])\leq1/\varphi$. Furthermore, for $r,s\in\mathbb{N}$ with $r\not=s$, we have $d_m([F(m,r,s)])=1/\varphi$.
\end{conjecture}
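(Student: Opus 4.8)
The plan is to reduce the upper bound $d_m([A])\le 1/\varphi$ to Fischler's theorem \cite{Fischler06} by converting the arithmetic $m$-palindrome condition into a combinatorial palindrome condition on an associated word. The starting point is a matrix reformulation of Lemma \ref{L:basic}: writing $M=\left(\begin{smallmatrix}p_i&p_{i-1}\\q_i&q_{i-1}\end{smallmatrix}\right)$ and $E=\mathrm{diag}(1,m)$, the prefix $(a_0,\dots,a_i)$ is an $m$-palindrome if and only if $EM=M^tE$, i.e.\ $M$ is self-adjoint with respect to the bilinear form $E$. When $m=1$ this is exactly the symmetry $M=M^t$ underlying ordinary palindromes, so this identity is the correct generalization on which to build.

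The key computation I would record is that a length-two block $B_x=\left(\begin{smallmatrix}xm&1\\1&0\end{smallmatrix}\right)\left(\begin{smallmatrix}x&1\\1&0\end{smallmatrix}\right)$ arising from a letter of $F(m,r,s)$ satisfies $EB_x=B_x^tE$, and hence $B_x^t=EB_xE^{-1}$. Consequently, for any product $M=B_{x_1}\cdots B_{x_n}$ one computes $M^tE=E\,B_{x_n}\cdots B_{x_1}$, so that $EM=M^tE$ holds precisely when $B_{x_1}\cdots B_{x_n}=B_{x_n}\cdots B_{x_1}$. Since $r\ne s$ makes the two blocks distinct, uniqueness of the partial-quotient sequence determined by a matrix product (freeness of the generating monoid) forces the words to agree, $x_1\cdots x_n=x_n\cdots x_1$; that is, an $m$-palindromic prefix ending at a block boundary corresponds exactly to a genuine palindromic prefix of the underlying $\{a,b\}$-word. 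As $|B_{x_1}\cdots B_{x_n}|=2n$, consecutive such prefix-lengths have the same successive ratios as the corresponding palindromic prefixes of the underlying word, so Fischler's bound transfers verbatim.

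Granting the upper bound (the first assertion of the conjecture), the second assertion is then immediate: the discussion preceding the conjecture already gives $d_m([F(m,r,s)])\ge 1/\varphi$, and $F(m,r,s)$ is not eventually periodic because the Fibonacci word is not and the length-two substitution $a\mapsto(rm,r)$, $b\mapsto(sm,s)$ is injective on letters when $r\ne s$ (an eventual period of $F(m,r,s)$, after adjusting its length to be even and its onset to a block boundary, would decode to an eventual period of the Fibonacci word). Thus the two bounds combine to yield $d_m([F(m,r,s)])=1/\varphi$.

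The main obstacle is the upper bound in full generality. The block computation handles $m$-palindromes of substitution form, but Lemma \ref{L:basic} also produces $m$-palindromes of odd length that are not of this form --- for instance $(3,1,1)$ is a $2$-palindrome. Writing $Q=P\cdot R$ for two nested $m$-palindromic prefixes, one finds that the tail matrix $R$ must satisfy $GR=R^tG$ with $G=EM_P$ symmetric, a $G$-self-adjointness condition generalizing the untwisted relation $M_PR=R^tM_P$ available when $m=1$. The heart of the matter is to convert this, for a non-eventually-periodic $A$, into the asymptotic growth estimate $|Q|/|P|\ge\varphi$ that caps the density --- a \emph{twisted} analogue of the Fine--Wilf / period-forcing lemma at the combinatorial core of Fischler's theorem. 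Because the factor $m$ sits inside $G$, two such symmetries need not compose to a pure shift of the partial-quotient sequence, so the classical argument does not transfer directly; and since extra, non-block-aligned palindromic prefixes only shrink the gaps, they \emph{threaten} the upper bound rather than help it. Establishing this twisted period-forcing statement --- equivalently, showing that the exceptional interior-ending $m$-palindromic prefixes are too sparse to raise the density --- is the crux, and is the reason the statement is posed as a conjecture.
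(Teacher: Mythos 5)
This statement is a \emph{conjecture} in the paper: the paper offers no proof of it, and the only thing actually established in Section \ref{S:maximal} is the lower bound $d_m([F(m,r,s)])\geq 1/\varphi$, obtained from the $1$-palindromic truncated Fibonacci prefixes $S_n^*$ together with Lemmas \ref{L:periodic_m_pals} and \ref{L:pert_symm_middle}. Your proposal, as you yourself acknowledge in its final paragraph, does not close the statement either, so it is not a proof; the missing step is not a technicality but the entire first assertion. Concretely: your reduction of the second assertion (the equality for $F(m,r,s)$) to the first assertion is valid --- it needs only the paper's lower bound plus non-eventual-periodicity of $F(m,r,s)$, and your block-boundary decoding argument for the latter is sound --- but this means everything rests on the upper bound $d_m([A])\leq 1/\varphi$ for \emph{arbitrary} non-eventually-periodic $m$-palindromic $A\in\mathbb{N}^{\mathbb{N}_0}$, which you leave open as a ``twisted period-forcing'' lemma. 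That lemma is precisely where the difficulty lives, and no argument for it is given.

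The partial results you do establish are correct, and in fact go beyond what the paper records. The reformulation of Lemma \ref{L:basic} as $EM=M^tE$ with $E=\mathrm{diag}(1,m)$ is right: the off-diagonal entries of that matrix equation are exactly the condition $p_{i-1}=mq_i$. The block identity $EB_x=B_x^tE$ for $B_x$ coming from a letter $x\in\{r,s\}$, the resulting equivalence $EM=M^tE\iff B_{x_1}\cdots B_{x_n}=B_{x_n}\cdots B_{x_1}$, and the freeness argument identifying block-aligned $m$-palindromic prefixes of $F(m,r,s)$ with $1$-palindromic prefixes of the Fibonacci word are all correct; likewise your observation that nested $m$-palindromic prefixes $Q=PR$ force $GM_R=M_R^tG$ with $G=EM_P$ symmetric. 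But this machinery only covers sequences built from the blocks $(xm,x)$, and your own example $(3,1,1)$ (a $2$-palindrome of odd length) shows that general $m$-palindromic prefixes need not respect any block structure, so Fischler's theorem cannot be transferred ``verbatim.'' Worse, since $d_m$ is a $\limsup$ of ratios of \emph{consecutive} $m$-palindromic prefix lengths, any additional non-block-aligned $m$-palindromic prefixes shrink the gaps and can only push the density upward, i.e.\ they work against the bound you need rather than being ignorable. Until that twisted period-forcing statement is proved, both assertions remain exactly what the paper says they are: conjectural.
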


\section{Quadratic Irrational Palindromes}\label{S:equivalent}

Here we prove Corollary \ref{C:m_one} via the following the result of Burger. In order to state this result, we say that two real numbers $\alpha$ and $\beta$ are \textit{equivalent} provided that there exist integers $a$, $b$, $c$, and $d$ such that 
\[\alpha=\frac{a+b\beta}{c+d\beta}\qquad ad-bc=\pm1.\]

\begin{theorem}[Burger; \cite{Burger05}]\label{T:Burger}
Let $\alpha=[A]$ denote a quadratic irrational continued fraction. The number $\alpha$ is equivalent to its algebraic conjugate if and only if $A$ is eventually periodic with period consisting of one or two $1$-palindromes. 
\end{theorem}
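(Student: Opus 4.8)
The plan is to translate the analytic condition ``$\alpha$ is equivalent to its algebraic conjugate'' into a purely combinatorial statement about the period word, and then to isolate a short lemma characterizing which words have their reversal as a cyclic rotation. Throughout I would invoke Serret's theorem, that two irrational numbers are equivalent in the sense of the definition preceding the statement (integer M\"obius maps of determinant $\pm1$) if and only if their continued fraction expansions share a common tail. Since algebraic conjugation commutes with the integer M\"obius action, the implication $\alpha\sim\beta\Rightarrow\alpha'\sim\beta'$ holds, and because such maps are invertible over $\mathbb Z$, the property ``$\alpha\sim\alpha'$'' depends only on the equivalence class of $\alpha$. As every quadratic irrational is equivalent to a reduced one, I would first replace $\alpha$ by a reduced representative; by Galois' theorem this reduction yields $\alpha=[\overline{P}]$ purely periodic, with period $P=(c_1,\dots,c_\ell)$.

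Next I would pin down the conjugate. Galois' theorem further gives $-1/\alpha'=[\overline{\widetilde P}]$, where $\widetilde P=(c_\ell,\dots,c_1)$ is the reversal; since $z\mapsto -1/z$ has determinant $1$, this yields $\alpha'\sim[\overline{\widetilde P}]$. Finally, two purely periodic expansions whose periods have the same length are equivalent precisely when those periods are cyclic rotations of one another, as their tails are then shifts of one another and conversely. Since $P$ and $\widetilde P$ have equal length, chaining these equivalences reduces the condition $\alpha\sim\alpha'$ to the single combinatorial statement that $\widetilde P$ is a cyclic rotation of $P$.

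The combinatorial heart is then the lemma: for a finite word $P$, the reversal $\widetilde P$ is a cyclic rotation of $P$ if and only if $P=UV$ for palindromes $U$ and $V$, where one of them may be empty, giving the ``one palindrome'' alternative. The reverse direction is immediate, since $\widetilde{UV}=\widetilde V\widetilde U=VU$ is exactly the rotation of $UV$ that cuts at the boundary between $U$ and $V$. For the forward direction, if $\widetilde P$ is the rotation of $P$ cutting after position $k$, I would write $P=UV$ with $|U|=k$, so that $\widetilde V\widetilde U=\widetilde P=VU$; comparing these two words on their common prefix of length $|V|$ forces $V=\widetilde V$, and the complementary suffixes then force $U=\widetilde U$. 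This produces precisely a period consisting of one or two $1$-palindromes, and combined with the reductions above it settles both directions.

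I expect the main obstacle to be bookkeeping rather than a single hard idea: carefully justifying that equivalence-to-conjugate is a class invariant, invoking the exact form of Galois' theorem that ties the conjugate to the reversed period, and matching the notion of ``period'' — a cyclic word, determined only up to rotation by the choice of reduced representative — to Burger's linear factorization into palindromes. One should also check that passing from the reduced representative back to the original $A$ preserves the conclusion, since the periods of equivalent numbers differ only by a cyclic rotation and the reversal-is-a-rotation property is itself rotation invariant. Finally, the length-comparison step must be phrased so that the empty-word case genuinely recovers the single-palindrome option.
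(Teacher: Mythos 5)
Your proposal is correct, but there is nothing in the paper to compare it against: the paper does not prove Theorem \ref{T:Burger} at all, importing it from Burger's article \cite{Burger05} and using it as a black box in the proof of Corollary \ref{C:m_one}. Judged on its own, your argument is a complete and sound proof along classical lines. The reduction chain works: equivalence-to-conjugate is a class invariant because algebraic conjugation is a field automorphism of $\mathbb{Q}(\alpha)$ fixing $\mathbb{Q}$ and hence commutes with the integer M\"obius action; Lagrange, Serret, and Galois let you replace $\alpha$ by a purely periodic $[\overline{P}]$; the Galois identity $-1/\beta'=[\overline{\widetilde{P}}]$ together with the fact that $z\mapsto -1/z$ is an integral M\"obius map of determinant $1$ gives $\alpha'\sim[\overline{\widetilde{P}}]$; and since two equal infinite periodic tails with the common period length $\ell=|P|$ must agree on their length-$\ell$ prefixes (using uniqueness of the expansion of an irrational), Serret reduces $\alpha\sim\alpha'$ to ``$\widetilde{P}$ is a cyclic rotation of $P$.'' Your combinatorial lemma is exactly right and cleanly proved: from $P=UV$ and $VU=\widetilde{P}=\widetilde{V}\widetilde{U}$, matching the length-$|V|$ prefixes gives $V=\widetilde{V}$ and the complementary suffixes give $U=\widetilde{U}$, with an empty factor recovering the single-palindrome case. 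The bookkeeping you flag is genuine but all checks out: any rotation of a period of $A$ is again a period (absorb an initial block into the preperiod), reversal carries the rotation class of $P$ to that of $\widetilde{P}$ so the property is rotation-invariant, and it passes between a power $P_0^k$ and the primitive period $P_0$ by comparing length-$|P_0|$ blocks. It is worth observing that the word-combinatorial heart of your lemma is precisely the manipulation the paper performs inside its own proof of Corollary \ref{C:m_one}: there, in the case $W'\not=W$, the period is split as $W=W'W''$ and both factors are shown to be $1$-palindromes by comparing a palindromic prefix with its reversal. So your blind proof in effect reconstructs the mechanism that Burger's theorem and the paper's corollary share, and makes the paper's use of the citation self-contained.
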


\begin{proof}[Proof of Corollary \ref{C:m_one}]
Assume $\alpha=[a_0,a_1,\dots]=[A]$ is an algebraic irrational $1$-palindrome. Theorem \ref{T:main} tells us via Galois' Theorem that $A$ must be purely periodic. In particular, $A=\overline{W}$ for some $W\in\mathbb N^*$. We write $W=(w_1,w_2,\dots,w_n)$ for some $n\in\mathbb N$. Let $k$ denote any index such that $(a_0,\dots,a_k)=A_k$ is $1$-palindromic. For $k$ large enough, we may write $A_k=W^jW'$ for some $j\geq1$. Here $W'$ is a prefix of $W$, and we allow for the possibility that $W'=W$. We consider separately the cases that $W'=W$ and $W'\not=W$.

We first consider the case that $W'= W$. Since $A_k$ is a $1$-palindrome, this implies that $\widetilde W=W$, and so $W$ is a $1$-palindrome. In other words, $\alpha$ is purely periodic, with $1$-palindromic period. By Theorem \ref{T:Burger}, the number $\alpha$ is equivalent to its algebraic conjugate. 

We now consider the case that $W'\not=W$. Since $A_k$ is a $1$-palindrome, $\widetilde W'$ is a prefix of $W$, and there exists $r<n$ such that $\widetilde W'=(w_1,w_2,\dots,w_r)$. Since $W'$ is also a prefix of $W$, we have $W'=(w_1,w_2,\dots,w_r)$. In other words, $W'$ is a $1$-palindrome. Write $W''=(w_{r+1},\dots,w_n)$. Again using the assumption that $A_k$ is a $1$-palindrome, we find that $(w_n,\dots,w_{r+1})=(w_{r+1},\dots,w_n)$. In other words, $W''$ is also a $1$-palindrome. Therefore, $A$ has period $W=W'W''$, which consists of two $1$-palindromes. By Theorem \ref{T:Burger}, the number $\alpha$ is equivalent to its algebraic conjugate.
\end{proof}

\begin{example}\label{X:non_equivalent}
Let $\alpha=[A]=[2,\overline{D}]=[2,\overline{1,1,2,2,3}]$. To verify that $\alpha$ is a $2$-palindrome, we proceed as follows. Write $B=(2,1)$ and $C=(1,2,2,1,0)$. For each $k\in\mathbb{N}$, define $T_k=S_C^k(B)$. Note that $[C]$ is a well-defined and positive extended continued fraction, and that the same is true of each $[T_k]$. By Remark \ref{R:extended} and Lemma \ref{L:pert_symm_middle}, each sequence $T_k$ is a $2$-palindrome. Furthermore, one can verify by way of induction that, for each $k\in\mathbb{N}$, a simplification of $T_k$ is given by $T_k'=(2,D^{n_k},1)$, where $n_1=1$ and, for each integer $i\geq 2$, we recursively define $n_i=2n_{i-1}+1$. Therefore, for every $k\in\mathbb{N}$, the sequence $T'_k$ is a  prefix for $A$. Finally, one can verify that $\widetilde{T}_k'=(1,\widetilde{D}^{n_k},2)$ is a simplification for $\widetilde{T}_k$, and so $2[\widetilde{T}_k']=[T_k']$. In other words, $A$ is a $2$-palindrome.
\end{example}

\begin{remark}
We first note that Example \ref{X:non_equivalent} illustrates Theorem \ref{T:main} in the sense that $\alpha/2$ has continued fraction expansion $[\overline{1,3,2,2,1}]$, and is therefore reduced, while $\alpha$ itself is not reduced. We then note that neither $\alpha$ nor $\alpha/2$ can be written as a continued fraction with period consisting of two $1$-palindromes. By Theorem 8.1, neither $\alpha$ nor $\alpha/2$ is equivalent to its algebraic conjugate. As pointed out in the introduction, this indicates that Corollary \ref{C:m_one} does not have a straightforward generalization to $m$-palindromes when $m\geq2$. 
\end{remark}

\bibliographystyle{amsalpha} 
\bibliography{bib}  

\end{document}